\documentclass[11pt]{amsart}
\textwidth6.2in \textheight8.5in \oddsidemargin0.00in
\evensidemargin0.00in
\usepackage{color, tikz, float}
\usetikzlibrary{arrows, decorations.markings}

\newcommand{\bt}{\begin{Theorem}}
\newcommand{\et}{\end{Theorem}}
\newcommand{\bi}{\begin{itemize}}
\newcommand{\ei}{\end{itemize}}
\newcommand{\bea}{\begin{eqnarray}}
\newcommand{\ba}{\begin{array}}
\newcommand{\eea}{\end{eqnarray}}
\newcommand{\ea}{\end{array}}

\newtheorem{Definition}{Definition}[section]
\newtheorem{Theorem}[Definition]{Theorem}
\newtheorem{Lemma}[Definition]{Lemma}

\newtheorem{Proposition}[Definition]{Proposition}

\newtheorem{Remark}[Definition]{Remark}

\newtheorem*{theoremA*}{Theorem A}
\newtheorem*{theoremB*}{Theorem B}
\newtheorem*{theoremC*}{Theorem C}
\newtheorem*{Proofofmainthm*}{Proof of main theorem}
\newcommand{\be}{\begin{equation}}
\newcommand{\ee}{\end{equation}}

\newcommand{\wtilde}{\widetilde}%
\newcommand{\R}{\mathbb R}%
\newcommand{\C}{\mathbb C}%
\newcommand{\Z}{\mathbb Z}%
\newcommand{\N}{\mathbb N}%
\newcommand{\x}{\mathfrak X}%

\newcommand{\cL }{\mbox{$ \mathcal L $}}%
\renewcommand{\Re}{\mbox{Re }}
\sloppy
\begin{document}
\baselineskip16pt

\author[Pratyoosh Kumar and Sumit Kumar Rano]{Pratyoosh Kumar and Sumit Kumar Rano }
\address{Department of Mathematics, Indian Institute of Technology Guwahati, Guwahati, 781039, India.
E-mail: pratyoosh@iitg.ac.in and s.rano@iitg.ac.in}

\title[Dynamics of semigroups on Homogeneous Trees]
{Dynamics of semigroups generated by analytic functions of the Laplacian on Homogeneous Trees}
\subjclass[2010]{Primary 43A85 Secondary 39A12, 20E08}
\keywords{Chaos, Homogeneous Tree, Spectrum of Laplacian, Eigenfuntion}
\thanks{Second Author is supported by Institute fellowships of IIT Guwahati.}

\begin{abstract} Let $f$ be a non-constant complex-valued analytic function defined on a connected, open set containing the $L^p$-spectrum of the Laplacian $\mathcal L$ on a homogeneous tree. In this paper we give a necessary and sufficient condition for the semigroup $T(t)=e^{tf(\mathcal{L})}$ to be chaotic on $L^{p}$-spaces. We also study the chaotic dynamics of the semigroup $T(t)=e^{t(a\mathcal{L}+b)}$ separately and obtain the sharp range of $b$ for which $T(t)$ is chaotic on $L^{p}$-spaces. It includes some of the important semigroups, such as the heat semigroup and the Schr\"{o}dinger semigroup.

\end{abstract}
\maketitle
\section{Introduction}
A homogeneous tree $\mathfrak{X}$ of degree $q+1$ is a connected graph with no circuits such that every vertex is connected to $q+1$ other vertices. Henceforth we assume $q\geq 2$. We denote by $d(x,y)$ the natural distance between any two vertices $x$ and $y$, which is the number of edges joining them. The canonical Laplacian $\mathcal{L}$ on $\mathfrak{X}$ is defined by
$$\mathcal Lf(x)=f(x)-\frac{1}{q+1}\sum\limits_{y:d(x,y)=1}f(y).$$

Unlike many other spaces, $\mathcal{L}$ defines a bounded linear operator on the Lebesgue spaces $L^{p}(\mathfrak{X})$ for every $p\in[1,\infty]$. Let $\sigma_p(\cL)$ denote the $L^p$-spectrum of the Laplacian $\mathcal L$. Let $f$ be a non-constant complex holomorphic function defined on a connected open set containing $\sigma_{p}(\mathcal{L})$. Then by the usual Riesz functional calculus (see \cite[Page 261]{WR}), it follows that the semigroup
\be\label{semigroup}T(t)=e^{tf(\mathcal{L})}\quad\text{where }t\geq 0,\ee
is a bounded linear operator on $L^{p}(\mathfrak{X})$ for every $p\in[1,\infty]$.

 In this paper we study the chaotic dynamics of the semigroup $T(t)=e^{tf(\mathcal{L})}$ on $L^p(\x)$. Before stating our main results, we recall some basic definitions. For details we refer \cite{MP,D}. Let $X$ be a Banach space and $\mathcal B(X)$ be the space of all bounded linear operators from $X$ into itself. A \emph{semigroup} on $X$ is a map $T: [0,\infty)\to \mathcal B(X)$ such that $T(0)=I$ and $T(s+t)=T(s)T(t)$ for all $s,t\geq 0$. Further $T(t)$ is said to be \textit{hypercyclic} if there exists $x\in X$ such that $\{T(t)x:t\geq 0\}$ is dense in $X$. A point $x\in X$ is said to be \textit{periodic} for $T(t)$ if there exists $t>0$ such that $T(t)x=x$. The set of all periodic points will henceforth be denoted by $X_{per}$. The semigroup $T(t)$ is said to be \textit{chaotic} if it is hypercyclic and its set of periodic points is dense in $X$.

It follows from the definition of hypercyclicity that the existence of a hypercyclic semigroup on a Banach space implies that it is separable. Therefore
 it is obvious that $T(t)$ cannot be hypercyclic on $L^\infty (\x)$ and hence not chaotic on $L^\infty (\x)$.
For other values of $p$, we have the following results.
\begin{theoremA*}{\label{ThA}}
		Let $2<p<\infty$ and $T(t)=e^{tf(\mathcal L)}$ be a semigroup on $L^p(\x)$ as defined in (\ref{semigroup}). Then the following statements are equivalent.
	\begin{itemize}
		\item [(1)] $T(t)$ is chaotic on $L^{p}(\mathfrak{X})$.
		\item [(2)]$T(t)$ has a non-trivial periodic point, that is $L^{p}(\mathfrak{X})_{per}\neq\{0\}$.
		\item  [(3)]The set of periodic points of $T(t)$ is dense in $L^{p}(\mathfrak{X})$, that is $\overline{L^p(\x)}_{per}=L^{p}(\mathfrak{X})$.
	\end{itemize}
\end{theoremA*}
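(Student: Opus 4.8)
The plan is to establish the cycle $(1)\Rightarrow(3)\Rightarrow(2)\Rightarrow(1)$, in which only the last link carries content. Indeed $(1)\Rightarrow(3)$ holds by definition, since a chaotic semigroup has a dense set of periodic points, and $(3)\Rightarrow(2)$ is immediate because $L^p(\mathfrak X)\neq\{0\}$, so a dense set of periodic points contains a non-trivial one. Everything therefore reduces to recovering full chaos from the existence of a single non-trivial periodic point. I would route this through the geometric condition
\[
(\ast)\qquad f\bigl(\mathrm{int}\,\sigma_p(\mathcal L)\bigr)\cap i\mathbb{R}\neq\emptyset,
\]
proving $(2)\Rightarrow(\ast)$ and $(\ast)\Rightarrow(1)$. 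The point of passing through $(\ast)$ is that for $2<p<\infty$ the set $\sigma_p(\mathcal L)$ is a genuine two-dimensional region, so $\mathrm{int}\,\sigma_p(\mathcal L)$ is non-empty and open and $F:=f\circ\gamma$ is a non-constant holomorphic map, hence open; here $\gamma$ denotes the eigenvalue of $\mathcal L$ on the spherical function, $\mathcal L\phi_z=\gamma(z)\phi_z$.

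For $(\ast)\Rightarrow(1)$ I would apply the standard eigenvalue criterion for chaotic semigroups (the Godefroy--Shapiro criterion, in the $C_0$-semigroup form of Desch, Schappacher and Webb). The eigenvectors I would use are the translated spherical functions $\Phi_{z,y}(x):=\phi_z(d(x,y))$; since $p>2$ these lie in $L^p(\mathfrak X)$ for $z$ in the open strip $S_p$ parametrising $\mathrm{int}\,\sigma_p(\mathcal L)$, and $\mathcal L\Phi_{z,y}=\gamma(z)\Phi_{z,y}$, whence $T(t)\Phi_{z,y}=e^{tF(z)}\Phi_{z,y}$. By $(\ast)$ and the open mapping theorem, $F(\mathrm{int}\,S_p)$ is an open set meeting $i\mathbb{R}$, so it contains points of both open half-planes and a non-degenerate sub-arc of $i\mathbb{R}$. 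It then suffices to show that each of the three families of eigenvectors --- those with $\mathrm{Re}\,F(z)>0$, those with $\mathrm{Re}\,F(z)<0$, and those with $F(z)\in 2\pi i\,\mathbb{Q}$ (which are genuinely periodic) --- spans a dense subspace. For this I would use totality: if $g\in L^{p'}(\mathfrak X)$ annihilates $\Phi_{z,y}$ for all $y$ and all $z$ in one such parameter set $U$, then the convolution $g\ast\phi_z$ vanishes identically, so by injectivity of the spherical transform $\widehat g(z,\cdot)$ vanishes for $z\in U$; since $z\mapsto\widehat g(z,\cdot)$ is holomorphic on $S_p$ and $U$ has an accumulation point, the identity theorem forces $\widehat g\equiv 0$, hence $g=0$ by Fourier inversion on $\mathfrak X$. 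The two half-plane families then give hypercyclicity and the rational family gives density of periodic points, yielding $(1)$.

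For $(2)\Rightarrow(\ast)$ I would argue through the bounded holomorphic functional calculus. A non-trivial periodic point $x_0$ with $T(t_0)x_0=x_0$ means $g(\mathcal L)x_0=0$ where $g(w)=e^{t_0 f(w)}-1$, so $g(\mathcal L)$ is not injective. As $f$ is non-constant, $g$ is a non-constant holomorphic function with only finitely many zeros $w_1,\dots,w_k$ in the compact set $\sigma_p(\mathcal L)$; factoring $g(w)=\prod_j (w-w_j)^{m_j}\,h(w)$ with $h$ non-vanishing on $\sigma_p(\mathcal L)$ gives $g(\mathcal L)=\prod_j(\mathcal L-w_j)^{m_j}\,h(\mathcal L)$ with $h(\mathcal L)$ invertible. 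Non-injectivity of a composition of injective maps is impossible, so some factor $(\mathcal L-w_j)$ must fail to be injective, i.e.\ $w_j\in\sigma_{\mathrm{point}}(\mathcal L)=\mathrm{int}\,\sigma_p(\mathcal L)$, the last identification being the standard description of the point spectrum of $\mathcal L$ on $L^p$. Since $g(w_j)=0$ gives $f(w_j)\in\frac{2\pi i}{t_0}\mathbb{Z}\subseteq i\mathbb{R}$, we obtain a point $w_j\in\mathrm{int}\,\sigma_p(\mathcal L)$ with $f(w_j)\in i\mathbb{R}$, which is exactly $(\ast)$.

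The delicate point --- and the reason the theorem is not merely formal --- is the interplay between the \emph{interior} and the \emph{boundary} of $\sigma_p(\mathcal L)$. A periodic point only forces $f$ to take a purely imaginary value somewhere on $\sigma_p(\mathcal L)$, but to feed the eigenvalue criterion I need this to happen in the \emph{interior}, where the eigenfunctions $\Phi_{z,y}$ actually belong to $L^p$; the factorization argument together with $\sigma_{\mathrm{point}}(\mathcal L)=\mathrm{int}\,\sigma_p(\mathcal L)$ is precisely what excludes the degenerate possibility that $f$ meets $i\mathbb{R}$ only on $\partial\sigma_p(\mathcal L)$. The second subtle ingredient is the density of periodic points: there the parameter set is only a countable subset of a one-dimensional arc, so no open-set totality argument is available, and one must instead lean on holomorphy of $z\mapsto\widehat g(z,\cdot)$ and the identity theorem. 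I expect these two steps to be the crux, the remaining estimates ($L^p$-membership of $\phi_z$ for $p>2$, analyticity of $z\mapsto\widehat g(z,\cdot)$, and injectivity of the spherical transform) being routine given the harmonic analysis on $\mathfrak X$.
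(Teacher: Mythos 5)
Your proposal is correct and its skeleton is the same as the paper's: everything is funnelled through the existence of a point $z_0\in S_p^\circ$ with $f(\gamma(z_0))\in i\mathbb{R}$; the open mapping theorem applied to the non-constant holomorphic map $\Gamma=f\circ\gamma$ then produces eigenvalues in both open half-planes and infinitely many in $i\mathbb{Q}$ (with parameters accumulating in $S_p^\circ$), and chaos follows from the Desch--Schappacher--Webb criterion plus a totality argument for the corresponding eigenfunction families that combines a Fourier-analytic vanishing statement, analyticity of $z\mapsto\widetilde g(z,\cdot)$ on $S_p^\circ$, the identity theorem, and the Plancherel theorem. Two implementation choices differ. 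First, for your step $(2)\Rightarrow(\ast)$ the paper simply invokes the point-spectrum spectral mapping theorem $P\sigma_p(g(\mathcal L))=g(P\sigma_p(\mathcal L))$ together with $P\sigma_p(\mathcal L)=\gamma(S_p^\circ)$; your factorization of $e^{t_0f(w)}-1$ into $\prod_j(w-w_j)^{m_j}h(w)$ is essentially a proof of that theorem, so nothing is lost. Second, and more substantively, the paper's eigenvector families are the Poisson transforms $\mathcal P_zF$ with $F$ ranging over all of $L^{r'}(\Omega)$, so that annihilation by $g\in L^{p'}(\mathfrak{X})$ gives $\int_\Omega\widetilde g(z,\omega)F(\omega)\,d\nu(\omega)=0$ for every $F$, hence $\widetilde g(z,\cdot)=0$ by bare duality (the restriction theorem guaranteeing $\widetilde g(z,\cdot)\in L^r(\Omega)$). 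Your translates of $\phi_z$ only give $g\ast\phi_z\equiv 0$, and passing from this to $\widetilde g(z,\cdot)=0$ requires the identity $g\ast\phi_z=\mathcal P_z(\widetilde g(z,\cdot))$ together with injectivity of the Poisson transform $\mathcal P_z$ --- not of the (radial) spherical transform --- which holds only off a discrete exceptional set of $z$; this is harmless, since deleting a discrete set from your parameter sets still leaves an accumulation point in $S_p^\circ$, but it is an extra lemma that the paper's choice of family avoids, and you still need the restriction theorem anyway just to make sense of $\widetilde g(z,\cdot)$ for non-radial $g\in L^{p'}(\mathfrak{X})$. Everything else, including your observation that the periodic family is indexed by a countable set and therefore forces the identity-theorem route rather than an open-set argument, matches the paper's Steps 1--3.
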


\begin{theoremB*} Let $1\leq p\leq 2$ and $T(t)$ be as in Theorem A. Then we have the following.
	\begin{itemize}
		\item[(1)] $T(t)$ has no non-trivial periodic point in $L^{p}(\mathfrak{X})$.
		\item[(2)] $T(t)$ is not hypercyclic on $L^{p}(\mathfrak{X})$.
		\end{itemize}
In particular $T(t)$ is not chaotic on $L^{p}(\mathfrak{X})$.
\end{theoremB*}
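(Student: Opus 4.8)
The plan is to push both statements down to the Hilbert space $L^2(\x)$, using that $\x$ carries the counting measure. For $1\le p\le 2$ this gives the continuous inclusion $L^p(\x)\subseteq L^2(\x)$ with $\|u\|_2\le\|u\|_p$, and finitely supported functions are dense in both spaces, so $L^p(\x)$ is $\|\cdot\|_2$-dense in $L^2(\x)$. Since $\mathcal L$ is bounded on every $L^p(\x)$ and $T(t)=e^{tf(\mathcal L)}$ is built from the same Riesz integral, the operators on $L^p(\x)$ and on $L^2(\x)$ agree on the common domain $L^p(\x)$. On $L^2(\x)$ the operator $\mathcal L$ is self-adjoint, so $f(\mathcal L)$ is normal and every $T(t)$ is a normal operator; moreover $\sigma_2(\mathcal L)$ is the real interval $I=[\,1-\f{2\sqrt q}{q+1},\,1+\f{2\sqrt q}{q+1}\,]$, and it is classical (Plancherel/Kesten) that $\mathcal L$ has purely absolutely continuous spectrum on $L^2(\x)$; in particular it has \emph{no} eigenvalues. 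These two Hilbert-space facts drive everything.

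For statement (1), suppose $x\in L^p(\x)$ satisfies $T(t)x=x$ for some $t>0$. By the inclusion, $x\in L^2(\x)$, so I compute $\ker(T(t)-I)$ inside $L^2(\x)$ via the spectral theorem for $\mathcal L$. Writing $E$ for the spectral measure of $\mathcal L$, one has $e^{tf(\mathcal L)}-I=\int_I(e^{tf}-1)\,dE$, and $e^{tf(\zeta)}=1$ precisely when $f(\zeta)\in\f{2\pi i}{t}\Z$. As $f$ is non-constant analytic and $I$ is compact, the set $\{\zeta\in I: f(\zeta)\in\f{2\pi i}{t}\Z\}$ is a finite set $\{\nu_1,\dots,\nu_m\}$. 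Hence $\ker(T(t)-I)$ is the range of $E(\{\nu_1,\dots,\nu_m\})$, which equals $\bigoplus_{j=1}^m\ker(\mathcal L-\nu_j)$. Since $\mathcal L$ has empty point spectrum on $L^2(\x)$, every summand is $\{0\}$, so $x=0$; thus $T(t)$ has no non-trivial periodic point.

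For statement (2), I first transfer hypercyclicity to $L^2(\x)$. If $g\in L^p(\x)$ has $\|\cdot\|_p$-dense orbit, then given $u\in L^2(\x)$ and $\eps>0$ I pick $h\in L^p(\x)$ with $\|u-h\|_2<\eps/2$, then $t$ with $\|T(t)g-h\|_p<\eps/2$; since $\|T(t)g-h\|_2\le\|T(t)g-h\|_p$, the orbit is $\|\cdot\|_2$-dense in $L^2(\x)$. So a hypercyclic vector on $L^p(\x)$ would be hypercyclic for $T(t)$ on $L^2(\x)$, and it suffices to rule this out. But each $T(t_0)$, $t_0>0$, is normal on $L^2(\x)$, while by the theorem of Conejero--M\"uller--Peris \cite{MP} a hypercyclic $C_0$-semigroup forces every $T(t_0)$ to be a hypercyclic operator; this contradicts the classical fact that no hyponormal (in particular, no normal) operator is hypercyclic \cite{MP}. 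Hence $T(t)$ is not hypercyclic on $L^2(\x)$, and therefore not on $L^p(\x)$. Together with (1), $T(t)$ is not chaotic for $1\le p\le 2$.

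The conceptual heart, and the step I expect to be the main obstacle, is the reduction to $L^2(\x)$: it rests entirely on the inclusion $L^p(\x)\subseteq L^2(\x)$ (valid because $\x$ is discrete) together with the absence of $L^2$-eigenfunctions of $\mathcal L$. This is exactly where the dichotomy with Theorem A lives. For $p>2$ one has $L^p(\x)\not\subseteq L^2(\x)$, and the spherical functions $\phi_\la$ with real $\la$ genuinely lie in $L^p(\x)$, so $f(\mathcal L)$ acquires a dense supply of eigenfunctions that a Godefroy--Shapiro type criterion converts into chaos; for $p\le 2$ these same functions fail to be $L^2$, hence fail to be $L^p$, and no eigenfunctions survive. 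The remaining points, all routine, are the consistency of the functional calculus across the scale, the elementary bound $\|\cdot\|_2\le\|\cdot\|_p$ behind the density transfer, and a clean citation for the purely absolutely continuous $L^2$-spectrum of $\mathcal L$.
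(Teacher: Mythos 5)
Your argument is correct, but it takes a genuinely different route from the paper's, delegating both halves to $L^{2}$-spectral theory and to named theorems of linear dynamics. For part (1) the paper never leaves $L^{p}$: Proposition \ref{lp-pointspectrum}(i) gives $P\sigma_{p}(\mathcal{L})=\emptyset$ for $1\leq p\leq 2$, and the point-spectral mapping theorem (Theorem \ref{theorem3.1}(b)) immediately yields $P\sigma_{p}(T(t))=\emptyset$, so $1$ is not an eigenvalue of any $T(t)$; this sidesteps the consistency-of-functional-calculus step that your $L^{2}$-embedding requires (that step is indeed routine --- the two resolvents agree on finitely supported functions for large $|\zeta|$ and the common resolvent set is connected --- but it is a step you should at least record). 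For part (2) the paper proves the normality obstruction by hand rather than quoting it: assuming a dense orbit of $h$, it tries to approximate $2h$ by $T(t_{n})h$, disposes of bounded sequences $t_{n}$ via strong continuity plus the empty point spectrum, and disposes of unbounded ones via the Plancherel formula, since $\|T(t_{n})h\|_{L^{2}(\mathfrak{X})}^{2}=\int_{-\tau/2}^{\tau/2}\int_{\Omega}e^{2t_{n}\Re f(\gamma(s))}|\wtilde{h}(s,\omega)|^{2}\,d\nu(\omega)\,d\mu(s)$ either stays $\leq\|h\|_{2}^{2}$ or blows up by monotone convergence. That dichotomy is precisely a self-contained, inlined version of your appeal to ``hyponormal operators are never hypercyclic,'' and the bounded/unbounded split replaces your use of Conejero--M\"{u}ller--Peris, so the paper needs no external input from linear dynamics. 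Your version is shorter and more conceptual; the paper's is elementary and self-contained. One caveat: the reference \cite{MP} you cite is Pramanik--Sarkar and is the source of neither the Conejero--M\"{u}ller--Peris theorem nor Kitai's theorem on normal operators; both are classical and true, so this is a citation defect rather than a mathematical gap, but they must be attributed correctly. The final reduction from $1\leq p<2$ to $p=2$ via $\|\cdot\|_{2}\leq\|\cdot\|_{p}$ and density of $L^{p}(\mathfrak{X})$ in $L^{2}(\mathfrak{X})$ is the same in both proofs.
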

For $2<p<\infty$ we define $\delta_p=1/p-1/2$ and
\be\label{Phia}\Phi_{p}(a)=(1-\gamma(i\delta_{p}))\cdot((\Re a)^{2}+\tanh^{2}(\delta_{p}\log q)(\Im a)^{2})^{1/2}\ee
where $\gamma(z)=1-\frac{q^{1/2+iz}+q^{1/2-iz}}{q+1}$.
\begin{theoremC*}{\label{ThB}}
	Suppose that $T(t)=e^{t(a\mathcal{L}+b)}$, $t\geq 0$ where a is a non-zero complex number and $b$ is real. Let $2<p<\infty$. Then the following are equivalent.
	\begin{itemize}
		\item[(1)] $T(t)$ is chaotic on $L^{p}(\mathfrak{X})$.
		\item[(2)] $T(t)$ has a non-trivial periodic point, that is $L^{p}(\mathfrak{X})_{per}\neq\emptyset$.
		\item[(3)] $a$ and $b$ satisfy  $-\Re a-\Phi_{p}(a)<b<-\Re a+\Phi_{p}(a),$ where $\Phi_p(a)$ is given by (\ref{Phia}).
		\item [(4)] $T(t)$ is hypercyclic.
	\end{itemize}
\end{theoremC*}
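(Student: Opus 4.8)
The plan is to deduce the abstract equivalence (1)$\Leftrightarrow$(2) directly from Theorem A and to reduce the remaining assertions to an explicit description of the point spectrum of the (bounded) generator $A=a\mathcal{L}+b$ on $L^p(\x)$. Recall that the spherical functions $\phi_\lambda$ satisfy $\mathcal{L}\phi_\lambda=\gamma(\lambda)\phi_\lambda$, and that for $2<p<\infty$ the decay estimate $|\phi_\lambda(x)|\le C(1+d(o,x))\,q^{(|\Im\lambda|-1/2)d(o,x)}$ shows $\phi_\lambda\in L^p(\x)$ precisely when $|\Im\lambda|<1/2-1/p=|\delta_p|$. These eigenfunctions, depending holomorphically on $\lambda$ over the open strip $S_p=\{\lambda:|\Im\lambda|<|\delta_p|\}$ and (together with their translates) spanning a dense subspace, are exactly the family underlying Theorem A applied to $f(z)=az+b$; they produce the eigenvalues $a\gamma(\lambda)+b$ of $A$. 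Thus (1)$\Leftrightarrow$(2) is immediate from Theorem A, and it remains to identify (2) with the explicit inequality (3) and to bring in hypercyclicity (4).

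For (2)$\Leftrightarrow$(3) I would characterize the existence of a nontrivial periodic point by the presence of a purely imaginary point-eigenvalue of $A$ carrying an $L^p$-eigenfunction, i.e.\ by $\Re(a\gamma(\lambda))=-b$ for some $\lambda\in S_p$ (any such $\lambda$ furnishes a periodic point, choosing the period from the imaginary part, and the eigenvalue $0$ a fixed point). The key step is therefore to compute the range of $\Re(a\gamma(\lambda))$ over $S_p$. Writing $\lambda=x+iy$, $\tau=\log q$, and using $\gamma(x+iy)=1-\tfrac{2\sqrt q}{q+1}\big(\cos(x\tau)\cosh(y\tau)-i\sin(x\tau)\sinh(y\tau)\big)$, one finds
\[
\Re\big(a\gamma(\lambda)\big)=\Re a-\tfrac{2\sqrt q}{q+1}\big(\Re a\,\cos(x\tau)\cosh(y\tau)+\Im a\,\sin(x\tau)\sinh(y\tau)\big).
\]
For fixed $y$ the bracket ranges, as $x$ varies, over the interval of radius $\sqrt{(\Re a)^2\cosh^2(y\tau)+(\Im a)^2\sinh^2(y\tau)}$, which is maximized as $|y|\uparrow|\delta_p|$. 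Using the identity $\tfrac{2\sqrt q}{q+1}\cosh(\delta_p\tau)=1-\gamma(i\delta_p)$ this supremum equals exactly $\Phi_p(a)$, so the range of $\Re(a\gamma(\lambda))$ over $S_p$ is the open interval $(\Re a-\Phi_p(a),\,\Re a+\Phi_p(a))$. Hence a periodic point exists iff $-b$ lies in this interval, which is precisely (3).

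Finally, for (4): the implication (1)$\Rightarrow$(4) is trivial since chaos includes hypercyclicity, so the content is (4)$\Rightarrow$(3), which I would prove contrapositively. If (3) fails, the computation above shows that $\sigma(A)=a\,\sigma_p(\mathcal{L})+b$ lies in a closed half-plane not crossing the imaginary axis. When the inclusion is strict ($\sup\Re\sigma(A)<0$ or $\inf\Re\sigma(A)>0$), the spectral mapping theorem for the bounded operator $A$, giving $\sigma(T(t))=e^{t\sigma(A)}$, forces every orbit $\{T(t)w\}$ either to converge to $0$ or to satisfy $\|T(t)w\|\to\infty$ with an exponential lower bound; in either case no orbit can be dense, so $T(t)$ is not hypercyclic. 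The delicate boundary case, in which $\sigma(A)$ merely touches the imaginary axis, is the main obstacle: there the touching occurs only at the endpoints $\Im\lambda=\pm|\delta_p|$ of the strip, where no $L^p$-eigenfunction exists, so there is no genuine crossing, and I would rule out hypercyclicity by exploiting this absence of $L^p$-eigenfunctions on $\partial S_p$ (the same reason that makes (3) a strict inequality). This is exactly where affineness is used: $f(\sigma_p(\mathcal{L}))=a\,\sigma_p(\mathcal{L})+b$ is a single connected region whose crossing of $i\R$ is equivalent to its meeting both open half-planes, which ties hypercyclicity to the periodic-point condition and explains why (4) can be added for $f(z)=az+b$ even though it is absent from Theorem A.
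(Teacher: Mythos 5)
Your reduction of (1)$\Leftrightarrow$(2) to Theorem A, your computation of the range of $\Re(a\gamma(\lambda))$ over the strip (which reproduces the paper's Lemma \ref{lemma4.3}: the maximum and minimum of $\Re(a\gamma(\cdot)+b)$ on $S_p$ are $\Re a+\Phi_p(a)+b$ and $\Re a-\Phi_p(a)+b$, attained on $\Im z=\pm|\delta_p|$), and the resulting equivalence (2)$\Leftrightarrow$(3) via the point-spectral mapping theorem are all sound and essentially the paper's route; likewise (1)$\Rightarrow$(4) is trivial.

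The genuine gap is in (4)$\Rightarrow$(3), exactly in the boundary case you yourself flag as ``the main obstacle.'' When $b=-\Re a\pm\Phi_p(a)$ the spectrum of $A=a\mathcal{L}+b$ (resp.\ of $-A$) has spectral bound $0$, so the growth-bound/spectral-mapping argument only gives $\|T(t)\|\leq C_{\varepsilon}e^{\varepsilon t}$, which rules out nothing. Your proposed fix --- exploiting the absence of $L^p$-eigenfunctions on $\partial S_p$ --- is not a proof: that absence precludes periodic points (and is indeed why (3) is a strict inequality), but hypercyclicity is not obstructed by the absence of point spectrum on $i\mathbb{R}$, and Kitai-type spectral necessary conditions are satisfied here since $\sigma(T(t))$ does meet the unit circle. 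What the paper actually uses, and what is missing from your argument, is the sharp two-sided operator norm estimate
$$\exp\{\Re\xi+\Phi_p(\xi)\}\leq\|e^{\xi\mathcal{L}}\|_{p\to p}\leq C\exp\{\Re\xi+\Phi_p(\xi)\}\quad\text{for all }\xi\in\mathbb{C}$$
(Lemma \ref{lemma4.1}), whose upper bound for $\Re\xi>0$ is itself a nontrivial extension of Setti's theorem via Bessel-function identities. With it, for $b\leq-\Re a-\Phi_p(a)$ every orbit satisfies $\|T(t)h\|\leq C\|h\|$ and is bounded, while for $b\geq-\Re a+\Phi_p(a)$ one writes $h=e^{-at\mathcal{L}}e^{at\mathcal{L}}h$ to get $\|T(t)h\|\geq C\|h\|$, so $0$ is not in the orbit closure; in either case no orbit is dense, covering strict and boundary cases uniformly. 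Without this norm estimate the boundary case of your contrapositive remains unproved, and since that estimate is the main analytic content of Section 4, the proposal omits the heart of the proof of (4)$\Rightarrow$(3).
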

\begin{Remark}
If we assume $b$ to be a complex number, then there won't be any significant change in the proof of the Theorem C because  $|e^{it\Im b}|=1$ for all $t\geq 0$. However there will be a minor modification in the statement of Theorem C (3) where $b$ will be replaced by $\Re b$.
\end{Remark}
	 As a consequence of Theorem C, we obtain the sharp range of perturbations for which the heat semigroup and the Schr\"{o}dinger semigroup are chaotic.
The heat semigroup$(\mathcal H_t)_{t\geq0}$ generated by the Laplacian $\mathcal L$ is given by the formula
$$\mathcal H_t=e^{-t\mathcal L}=\sum\limits_{n=0}^\infty\frac{(-t\mathcal L)^n}{n!}.$$
 In \cite{CMS} Cowling, Meda, and Setti studied the behaviour of $\mathcal H_t$ and its $L^p-L^r$ operator norm. Further Setti considered
the analogous problem for the complex-time heat operator $\mathcal H_\xi,$  and derived precise estimates for the $L^p-L^r$ operator norms of $\mathcal H_\xi,$ for $\xi$ belonging to the half-plane $\Re\xi\leq 0$ in his paper \cite{AS}. Proof of the Theorem C largely depends on the sharp $L^p$ norm estimates of the operator $e^{\xi\cL}$ for $\xi\in \C$, subsequently we have extended Setti's result for all $\xi\in \C$.

To put things in perspective, we now discuss the background of the subject. The study of chaotic dynamics of the heat semigroup on Riemannian symmetric spaces $M$ of non-compact type,  started with the work of Ji and Weber \cite{JW}. They studied the chaotic behaviour of certain shifts of the heat semigroup corresponding to the Laplace--Beltrami operator $\Delta$ endowed with Riemannian structure, namely $$T(t)=e^{-t(\Delta-c)}, \;\; t\geq0, \;\;\ c\in\R$$
 on the space of all radial functions on $L^p(M)$. In \cite{MP}, Pramanik and Sarkar extended the above result,  and gave a complete characterization for the chaotic behaviour  of the semigroup $T(t)$ on the whole of $L^{p}(M)$ and its related subspaces. A similar result concerning the chaotic behaviour of the heat semigroup (resp. Dunkl heat semigroup) is also known for harmonic $NA$ groups (resp. Euclidean spaces) (see \cite{BT} and \cite{RPS}). In \cite{MP}, the authors proved the following result:
 \begin{Theorem} Let $M$ be a Riemannian symmetric space of non-compact type, with $T(t)$ defined as above and $c_p=\frac{4|\rho|^2}{pp'}$. Then for $2<p<\infty$, $T(t)$ is chaotic on $L^{p}(M)$ if and only if $c>c_{p}$.
 \end{Theorem}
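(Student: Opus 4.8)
The plan is to read both implications off the spectral picture of $\Delta$ on $L^p(M)$, combined with the Desch--Schappacher--Webb (DSW) eigenvalue criterion for chaos, in exact parallel with the tree results above. Write the generator as $\mathcal A = c-\Delta$, so $T(t)=e^{t\mathcal A}$. The relevant eigenfunctions are the elementary spherical functions $\phi_\lambda$ and, to reach the non-radial part of $L^p(M)$, the horocyclic plane waves $e_{\lambda,b}(x)=e^{(i\lambda+\rho)(A(x,b))}$ indexed by boundary points $b\in K/M$; each satisfies $\Delta e_{\lambda,b}=(|\rho|^2+\langle\lambda,\lambda\rangle)e_{\lambda,b}$, hence $\mathcal A e_{\lambda,b}=\mu(\lambda)e_{\lambda,b}$ with $\mu(\lambda)=c-|\rho|^2-\langle\lambda,\lambda\rangle$. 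An eigenfunction (after integration against smooth boundary data) lies in $L^p(M)$ precisely when $\lambda$ belongs to the admissible tube, which in rank one is $|\operatorname{Im}\lambda|<\gamma_p|\rho|$ with $\gamma_p=1-2/p$ (and the analogous tube in higher rank); this is the sharp membership condition coming from the decay $\phi_{i\sigma}(a_t)\sim e^{(\sigma-\rho)t}$ against the volume growth $e^{2\rho t}$.

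For sufficiency ($c>c_p$), I would first compute the range of $\operatorname{Re}\mu$ over the tube. Writing $\lambda=\xi+i\eta$ gives $\operatorname{Re}\mu=c-|\rho|^2-\xi^2+\eta^2$, whose supremum over the open tube is $c-|\rho|^2(1-\gamma_p^2)=c-c_p$, using $1-\gamma_p^2=\tfrac{4}{p}\bigl(1-\tfrac1p\bigr)=\tfrac{4}{pp'}$. Hence, when $c>c_p$, the holomorphic image $\mu(V)$ of a suitable open connected slice $V$ of the tube (for instance $\{\operatorname{Re}\lambda>0\}\cap\text{tube}$, on which $\lambda\mapsto\lambda^2$, and therefore $\mu$, is injective) is an open connected subset of $\mathbb C$ meeting the right half-plane, the left half-plane, and --- since $c-c_p>0$ --- an arc of the imaginary axis. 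Taking $f(\mu)=e_{\mu^{-1}(\cdot),\,\cdot}$ paired against fixed smooth boundary data as a weakly holomorphic $L^p(M)$-valued field of eigenvectors, the DSW criterion yields hypercyclicity together with density of periodic points, i.e.\ chaos; the periodic points are exactly those built from eigenvalues $\mu(\lambda)\in i\mathbb R$.

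For necessity ($c\le c_p$), I would bypass any spectral decomposition and invoke the sharp heat-semigroup bound $\|e^{-t\Delta}\|_{L^p\to L^p}=e^{-tc_p}$ ($c_p$ being the bottom of the $L^p$-spectrum), which gives $\|T(t)\|_{L^p\to L^p}=e^{t(c-c_p)}\le 1$. A uniformly bounded semigroup has only bounded orbits and so cannot be hypercyclic, hence is not chaotic; this settles $c<c_p$ and the borderline $c=c_p$ simultaneously.

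The two analytic obstacles are the following. First, the density hypothesis of the DSW criterion: one must show that no nonzero $\psi\in L^{p'}(M)$ annihilates all admissible eigenfunctions. Via Fubini this reduces to the vanishing of the Helgason--Fourier transform $\widetilde\psi(\lambda,b)$ on the tube, and then to its injectivity; the technical heart is that $\widetilde\psi$ extends holomorphically in $\lambda$ to the admissible tube (a Kunze--Stein/Hausdorff--Young phenomenon), so that vanishing on an open set forces vanishing identically. Second, and most delicate quantitatively, is the \emph{sharp} constant in $\|e^{-t\Delta}\|_{p\to p}=e^{-tc_p}$, which is what pins the threshold to exactly $c_p$ rather than to a non-sharp bound; this is where the precise spherical-function asymptotics, equivalently the exact parabolic shape of the $L^p$-spectrum, enter. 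I expect the completeness/density step to be the principal conceptual hurdle and the sharp norm estimate to be the principal quantitative one.
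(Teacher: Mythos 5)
This theorem is not proved in the present paper---it is quoted from Pramanik--Sarkar \cite{MP} as background---but your outline follows precisely the strategy the paper attributes to that proof and itself adapts to trees in Theorems A and C: the Desch--Schappacher--Webb eigenvalue criterion applied to plane-wave/Poisson eigenfunctions, density of their span via holomorphy and injectivity of the Helgason--Fourier transform on the tube $|\Im\lambda|<(1-2/p)|\rho|$, and the identification of the threshold $c_p$ as the leftmost point of the parabolic $L^p$-spectrum. Your computation $\sup\Re\mu=c-c_p$ and the necessity argument via the exact Herz-majorization norm $\|e^{-t\Delta}\|_{p\to p}=e^{-tc_p}$ (which also settles the borderline $c=c_p$) are correct; the only loose ends are that periodicity of finite linear combinations requires eigenvalues in $i\mathbb{Q}$ (commensurable periods) rather than merely $i\mathbb{R}$, and that the higher-rank tube is only gestured at.
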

 The proof of the above theorem is largely influenced by the work of Desch, Schappacher and Webb. In \cite{DSW} Desch et al. gave a sufficient condition for a semigroup to be chaotic in terms of spectral properties of its generator. Hence the $L^{p}$-spectrum of $\Delta$ on $M$, which is a $p$-depending parabolic region, played a vital role in examining chaoticity and determining the range of perturbation for which the heat semigroup is chaotic. The $p$-dependence of the perturbation (that is, $c_{p}$), which relied heavily on the spectral properties of the generator, demanded that its point spectrum must contain infinitely many purely imaginary points.

A homogeneous tree may be viewed as a discrete analogue of a hyperbolic space. Nevertheless, there are certain differences due to the structural difference between these spaces. One such difference is that  the Laplacian $\mathcal L$ is a bounded operator on $L^p(\x)$. But, the  $L^p$-spectrum of the Laplacian $\mathcal L$ is also a $p$-depending conic region (in fact it is an elliptic region (\ref{lp-spectrum}) in $\C$). This elliptic region plays an important role in examining chaoticity and also in determining the range of perturbation for which the affine semigroup is chaotic.

Another motivation to study the chaotic dynamics of semigroups generated by analytic functions of the Laplacian on homogeneous trees is the work of deLaubenfels and Emamirad. In \cite{DE}, they studied the dynamical behavior of a class of operators, which is generated by non-constant analytic functions of the shift operator on weighted $L^p(\N)$ spaces.

We end this section by providing a quick outline on the contents of this article. We have collected all relevant notation, definitions, and facts about the homogeneous trees in Section 2. The proofs of Theorem A and Theorem B are given in Section 3. In Section 4 we prove Theorem C and discuss some of its notable consequences.

\section{Background materials on homogeneous trees}
\subsection{General Notations} The letters $\mathbb{R}$ and $\mathbb{C}$ will denote the set of all real numbers and complex numbers respectively. For $z\in \C$ we use the notation $\Re z$ and $\Im z$ for real and imaginary part of $z$ respectively. We will use the standard practice of using the letter $C$ for constant, whose  value may change from one line to another line. For every Lebesgue exponent $p\in(1,\infty)$, we write $p^{\prime}$ to denote the conjugate exponent $p/(p-1)$. Further we define $p^{\prime}=\infty$ when $p=1$ and vice-versa. For $p\in(1,\infty)$ let
$$\delta_p=\frac{1}{p}-\frac{1}{2}\;\;\; \text{and}\;\;\; S_p=\{z\in\C: |\Im z|\leq|\delta_p|\}.$$
We assume $\delta_1=-\delta_\infty=1/2$ so that $S_1=\{z\in\C: |\Im z|\leq1/2\}$. It is important to note that $\delta_p=-\delta_{p^{\prime}}$ and $S_{p}=S_{p^{\prime}}$ for any $p\in[1,\infty]$. When $p=2$, the infinite strip reduces to the real line. We shall henceforth write $S_p^\circ$ and $\partial{S_p}$ to denote the usual interior and the boundary of $S_p$ respectively. For any bounded linear operator $T$ defined on Lebesgue space $L^{p}(\mathfrak{X})$, we shall write $\sigma_p(T),~P(\sigma_{p}(T))$ to respectively denote the set of spectrum and point spectrum of $T$ in $L^{p}(\mathfrak{X})$.
\subsection{Basics}
Here we review some general facts about the homogeneous trees $\x$, most of which are already known (see, for e.g. \cite{CMS,FTC,FTP1,FTP2} and the references therein). Most of our notations are consistent with that of \cite{CMS}. Let $\x$ be a homogeneous tree and $d$ be the natural distance on $\x$. Let $o$ be a fixed reference point on $\x$. A function on $\x$ is said to be radial if $f(x)=f(y)$ whenever $d(x,o)=d(y,o)$.

Let $G$ be the group of isometries of the metric space $(\x,d)$ and $K$ be the stabilizer of $o$ in $G$. Then $\x$ can be realized as a coset space $G/K$ via the map $g\rightarrow g\cdot o$ and every function on $\x$ corresponds to a $K$-right invariant function on $G$. Further radial functions on $\x$ correspond to $K$-bi-invariant functions on $G$. Throughout this article we shall write $E(\x)^\#$ to denote the subspace of all radial functions in a function space $E(\x)$.
The boundary of $\x$, denoted by $\Omega$ is the set of all infinite geodesic rays of the form $\{\omega_{0},\omega_{1},\ldots\}$ where $\omega_{0}=o$ and $\omega_{n}\in\mathfrak{X}$. For fixed $\omega\in\Omega$, the map $k\rightarrow k\cdot\omega$ represents a transitive action of $K$ on $\Omega$.

 \subsection{Poisson Kernel and Poisson Transform} On the boundary $\Omega$, there exists a unique $K$-invariant, $G$-quasi-invariant probability measure $\nu$ and the Poisson kernel $p(g\cdot o,\omega)$ is defined to be the Radon-Nikodym derivative $d\nu(g^{-1}\omega)/d\nu(\omega)$. The Poisson kernel can be explicitly written as
\be \label{poissonkernel} p(x,\omega)=q^{h_{\omega}(x)} \;\; \forall x\in \x \;\; \forall \omega\in\Omega,\ee
where $h_{\omega}(x)$ is the height of $x$ with respect to $\omega$ (See \cite{FTC}). For $z\in\mathbb{C}$ and a suitable function $F$ defined on the boundary, its Poisson transformation $\mathcal{P}_{z}F$ is given by
$$\mathcal{P}_{z}F(x)=\int\limits_{\Omega}p^{1/2+iz}(x,\omega)F(\omega)d\nu(\omega).$$
It follows from the definition that $\mathcal{P}_{z}=\mathcal{P}_{z+\tau}$, where $\tau=2\pi/\log q$. It is also a well-known fact that $\mathcal L \mathcal{P}_{z}F(x) =\gamma(z) \mathcal{P}_{z}F(x)$ where $\gamma$ is an analytic function defined by the formula
\begin{equation}\label{gammaz}
\gamma(z)=1-\frac{q^{1/2+iz}+q^{1/2-iz}}{q+1}.
\end{equation}
\subsection{Spherical Function}
The elementary spherical function $\phi_z$ which is defined as $\mathcal{P}_z 1$, is given by the formula (see \cite{CMS,FTP1})
\begin{equation}\label{equation2.4}
\phi_z(x)= \begin{cases}
\vspace*{.2cm} \left(\frac{q-1}{q+1}|x|+1\right)q^{-|x|/2}&\forall z\in\ \tau\Z\\
\vspace*{.2cm}\left(\frac{q-1}{q+1}|x|+1\right)q^{-|x|/2}(-1)^{|x|}&\forall z\in {\tau/2}+\tau\Z\\
\mathbf{c}(z)q^{{(iz-1/2)}|x|}+\mathbf{c}(-z)q^{{(-iz-1/2)}|x|}&\forall z\in\C\setminus(\tau/2)\Z,
\end{cases}
\end{equation}
where $\mathbf{c}$ is a meromorphic function given by
$$\mathbf{c}(z)=\frac{q^{1/2}}{q+1}\frac{q^{1/2+iz}-q^{-{1/2}-iz}}{q^{iz}-q^{-iz}}\quad\forall z\in \mathbb{C}\setminus(\tau/2)\Z.$$
We now enlist some important properties of $\phi_{z}$ in the following lemma, most of which follows easily from the explicit formula above (for details see \cite{FTP1,KR}).
\begin{Lemma}\label{analytical phi}
	Let $\phi_{z}$ be the spherical function given by (\ref{equation2.4}). Then
	\begin{enumerate}
		\item[(i)] $\phi_{z}$ is a radial eigenfunction of $\mathcal{L}$ with eigenvalue $\gamma(z)$ and every radial eigenfunction of $\mathcal{L}$ with eigenvalue $\gamma(z)$ is a constant multiple of $\phi_{z}$.
		\item[(ii)] For every $x\in\mathfrak{X}$, the map $z\rightarrow\phi_{z}(x)$ is an entire function.
		\item[(iii)] $\phi_{z}\in L^{\infty}(\x)^\#$ if and only if $z\in S_{1}$.
        \item[(iv)] For $1<p<2$, $\phi_z\in L^{p^{\prime}}(\x)^\#$ if and only if $z\in S_p^\circ$.
        \item[(v)] For $1\leq p\leq 2$, $\phi_z\notin L^{p}(\x)$ for any $z\in S_p$.
\end{enumerate}
\end{Lemma}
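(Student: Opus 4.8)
The plan is to separate the two analytic statements (i)--(ii) from the three membership statements (iii)--(v), since the latter all reduce to a single asymptotic computation. For (i) I would use the relation $\mathcal L\mathcal P_z F=\gamma(z)\mathcal P_z F$ recorded above: since $\phi_z=\mathcal P_z 1$, taking $F\equiv 1$ shows at once that $\phi_z$ is a radial eigenfunction with eigenvalue $\gamma(z)$. For the converse I would use that a radial function $f$ depends only on $n=|x|=d(x,o)$; writing $f(x)=F(n)$ and evaluating $\mathcal Lf=\gamma(z)f$ at the origin (which has $q+1$ neighbours, all at distance $1$) and at a vertex of distance $n\ge 1$ (which has exactly one neighbour at distance $n-1$ and $q$ at distance $n+1$) yields the relation $F(1)=(1-\gamma(z))F(0)$ together with a second-order recurrence determining $F(n+1)$ from $F(n),F(n-1)$. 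Hence $F(0)$ determines $F$ completely, the space of radial eigenfunctions is one-dimensional, and since $\phi_z(o)=1\neq 0$ every such eigenfunction is a scalar multiple of $\phi_z$. For (ii) I would bypass the case distinction in (\ref{equation2.4}) and instead use $\phi_z(x)=\int_\Omega q^{(1/2+iz)h_\omega(x)}\,d\nu(\omega)$: for fixed $x$ the height $h_\omega(x)$ takes only finitely many integer values, so $\phi_z(x)=\sum_k \nu(\{\omega:h_\omega(x)=k\})\,q^{(1/2+iz)k}$ is a finite sum of entire functions of $z$, hence entire.

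The statements (iii)--(v) rest on one reduction: for a radial function the $L^r$ norm is the weighted series $\|f\|_{L^r}^r=\sum_{n\ge 0}|F(n)|^r N(n)$, where $N(n)$ is the number of vertices at distance $n$ from $o$, with $N(0)=1$ and $N(n)=(q+1)q^{n-1}$ for $n\ge 1$, so $N(n)\asymp q^n$. I would then read the leading exponential behaviour off the third line of (\ref{equation2.4}): writing $b=\Im z$, the two terms $\mathbf c(z)q^{(iz-1/2)|x|}$ and $\mathbf c(-z)q^{(-iz-1/2)|x|}$ have moduli $q^{(-b-1/2)|x|}$ and $q^{(b-1/2)|x|}$ respectively, so away from the zeros of $\mathbf c$ one has $|\phi_z(x)|\asymp q^{(|b|-1/2)|x|}$ as $|x|\to\infty$. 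Feeding this into the series, the index-$n$ term behaves like $q^{[(|b|-1/2)r+1]n}$, and everything reduces to the sign of $(|b|-1/2)r+1$.

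With this in hand the three conclusions are short inequality checks. For (iii) ($r=\infty$) boundedness forces the dominant modulus to be non-increasing, i.e. $|b|-1/2\le 0$, which is exactly $|b|\le 1/2$, i.e. $z\in S_1$; the real (polynomial) values $z\in(\tau/2)\Z$ give $|\phi_z(x)|\asymp |x|q^{-|x|/2}\to 0$ and cause no trouble. For (iv), with $r=p'>2$, convergence of $\sum_n q^{[(|b|-1/2)p'+1]n}$ holds iff $(|b|-1/2)p'+1<0$, i.e. $|b|<1/2-1/p'=\delta_p$, which is precisely $z\in S_p^\circ$; at the boundary $|b|=\delta_p$ the exponent is $0$ and the series diverges, confirming that the strip must be open. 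For (v), with $1\le p\le 2$ and $r=p$, convergence would require $|b|<1/2-1/p=-\delta_p\le 0$, impossible since $|b|\ge 0$, so the series always diverges and $\phi_z\notin L^p$ for every $z\in S_p$.

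The part demanding the most care, and the main obstacle, will be justifying $|\phi_z(x)|\asymp q^{(|b|-1/2)|x|}$ at the exceptional arguments: where the coefficient of the dominant term vanishes, or where $z$ is real and the two terms may interfere. A direct computation shows the zeros of $\mathbf c$ occur only on the lines $\Im z=\pm 1/2$, which lie outside $S_p^\circ$ for $1<p<2$ and on $\partial S_1$ for $p=1$; at such a point only the subdominant term survives, of modulus of order $q^{-|x|/2}$, and recomputing the exponent shows the relevant series still diverges, so none of the iff boundaries is affected. For real $z$ the two terms combine to a bounded oscillating factor times $q^{-|x|/2}$ that does not tend to zero, so a positive proportion of indices $n$ satisfies $|\phi_z(x)|\ge c\,q^{-|x|/2}$; the resulting lower bound forces divergence of the series in (v) and keeps boundedness in (iii), with the polynomial values only making divergence easier through the extra factor $|x|$. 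Making these two marginal cases rigorous, rather than the generic estimate, is where the bulk of the work lies.
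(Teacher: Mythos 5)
Your argument is correct and is essentially the approach the paper intends: the paper does not prove this lemma but refers to \cite{FTP1,KR}, and the standard proofs there proceed exactly as you do --- the recurrence argument for uniqueness in (i), analyticity of $z\mapsto q^{(1/2+iz)k}$ for (ii), and reading the exponential size $q^{(|\Im z|-1/2)|x|}$ off the third line of (\ref{equation2.4}) and summing over spheres of cardinality $\asymp q^n$ for (iii)--(v). One small correction to your discussion of the degenerate cases: the zeros of $\mathbf{c}$ lie on $\Im z=+1/2$ and those of $\mathbf{c}(-\cdot)$ on $\Im z=-1/2$, so for $\Im z\neq 0$ it is always the \emph{subdominant} coefficient that can vanish, never the dominant one; at such a point the surviving (dominant) term has constant modulus $|\mathbf{c}(-z)|$ rather than modulus of order $q^{-|x|/2}$ as you state. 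This makes that marginal case easier than you anticipate and does not affect any of your conclusions; the only genuinely delicate point is the possible cancellation at real $z$, which your positive-proportion lower bound handles correctly.
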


We also need the following estimates of $\mathcal{P}_{z}F$, which can be considered as a generalisation of the size estimates of $\phi_{z}$ given above. For details about this topic we refer \cite{KR}.
 \begin{Proposition}\label{poisson estimates}
	Let $1<p<2$ and $z\in\mathbb{C}$ is such that $\Im z=\delta_{r^{\prime}}$ where $p<r<p^{\prime}$. Then for all $F\in L^{r^{\prime}}(\Omega)$,
	\begin{equation}\label{equation2.6}
	\|\mathcal{P}_{z}F\|_{L^{p^{\prime}}(\x)}\leq C \|F\|_{L^{r^\prime}(\Omega)}.
	\end{equation}
\end{Proposition}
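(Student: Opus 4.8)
The plan is to first strip away the oscillation and reduce to a positive integral operator, and then to exploit the $q$-adic geometry of the boundary $\Omega$ to turn the resulting estimate into a summable expression on $\x$. Setting $s=\Im z=\delta_{r'}$ and using $p(x,\omega)=q^{h_\omega(x)}>0$, I would begin from the pointwise identity $|p^{1/2+iz}(x,\omega)|=q^{(1/2-s)h_\omega(x)}=p^{1/2-s}(x,\omega)$, which gives the majorization $|\mathcal{P}_{z}F(x)|\le \mathcal{P}_{is}|F|(x)$. Hence it suffices to prove $\|\mathcal{P}_{is}G\|_{L^{p'}(\x)}\le C\|G\|_{L^{r'}(\Omega)}$ for $G=|F|\ge 0$, where the kernel is now positive. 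The hypothesis $p<r<p'$ is equivalent to $|\delta_{r'}|<\delta_p$, i.e.\ $is\in S_p^\circ$, so that $\mathcal{P}_{is}1=\phi_{is}\in L^{p'}(\x)$ by Lemma~\ref{analytical phi}(iv); this is precisely the case $G\equiv 1$ and it fixes the expected size of the constant.

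Next I would decompose $\Omega$ along the geodesic $[o,x]$. Writing $x\wedge\omega$ for the last common vertex of $[o,x]$ and the ray representing $\omega$, one has $h_\omega(x)=2|x\wedge\omega|-|x|$, and the shells $S_k(x)=\{\omega:|x\wedge\omega|=k\}$, $0\le k\le |x|$, partition $\Omega$ with $\nu(S_k(x))\sim q^{-k}$. Using the two identities $1/2-s=1/r$ and $1-2s=2/r$, which follow directly from $s=\delta_{r'}$, the positive Poisson integral becomes
\[
\mathcal{P}_{is}G(x)=q^{-|x|/r}\sum_{k=0}^{|x|}q^{2k/r}\int_{S_k(x)}G\,d\nu .
\]
Applying H\"older on each shell, $\int_{S_k(x)}G\,d\nu\le \nu(S_k(x))^{1/r}\left(\int_{S_k(x)}G^{r'}d\nu\right)^{1/r'}$, and absorbing $\nu(S_k(x))^{1/r}\sim q^{-k/r}$, I arrive at the pointwise bound
\[
\mathcal{P}_{is}G(x)\le C\,q^{-|x|/r}\sum_{k=0}^{|x|}q^{k/r}\,\beta_k(x)^{1/r'},\qquad \beta_k(x)=\int_{S_k(x)}G^{r'}d\nu ,
\]
where, crucially, $\sum_{k=0}^{|x|}\beta_k(x)=\|G\|_{L^{r'}(\Omega)}^{r'}$ for every $x$, since the shells partition $\Omega$.

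Finally I would raise to the power $p'$, sum over $x$ grouped by spheres $|x|=n$ (each containing $\sim q^n$ vertices), and organize the double sum over $(x,k)$ according to the natural filtration of $\Omega$: each shell $S_k(x)$ is a node of the boundary tree, and a fixed boundary mass is counted along the geodesic running from that node down to $o$. Converting $\sum_x(\mathcal{P}_{is}G(x))^{p'}$ into $\|G\|_{L^{r'}(\Omega)}^{p'}$ is a discrete fractional-integration (Schur/Hardy-type) estimate on the tree, and this reassembly is the main obstacle: the geometric sums in $k$ and the $\sim q^n$ growth of the spheres must be balanced with no loss, which is exactly where the strict inclusion $is\in S_p^\circ$ (equivalently $p<r<p'$) enters — just as in the verification for $G\equiv 1$, where convergence of $\sum_n q^n(q^{-n/r})^{p'}$ reduces to $r<p'$. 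A cleaner but less self-contained alternative is to keep the reduction to the positive kernel, record the endpoint $\mathcal{P}_{is}\colon L^\infty(\Omega)\to L^{p'}(\x)$ (valid throughout the open strip via $\mathcal{P}_{is}G\le \|G\|_\infty\,\phi_{is}$), and interpolate it against an $L^2$-based restriction estimate coming from the Plancherel theorem on $\x$; I would expect the authors' reference \cite{KR} to proceed along one of these two lines.
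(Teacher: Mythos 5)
The paper itself does not prove this proposition --- it is quoted from the authors' earlier work \cite{KR} --- so your attempt has to stand on its own, and it has a genuine gap. The preliminary reductions are all correct: the majorization $|\mathcal{P}_{z}F|\leq\mathcal{P}_{i\delta_{r'}}|F|$, the identity $h_\omega(x)=2|x\wedge\omega|-|x|$, the shell decomposition with $\nu(S_k(x))\sim q^{-k}$, the exponent bookkeeping $1/2-\delta_{r'}=1/r$, and the observation that $p<r<p'$ is exactly $i\delta_{r'}\in S_p^{\circ}$. The problem is that the step you treat as routine --- H\"older on each shell separately --- already destroys the estimate, so the ``reassembly'' you defer is not merely the hard part: it cannot be completed from the intermediate bound you write down. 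Concretely, let $\Omega_v$ denote the sector of boundary rays passing through a vertex $v$ with $|v|=m$, and take $G=\nu(\Omega_v)^{-1/r'}\mathbf{1}_{\Omega_v}$, so $\|G\|_{L^{r'}(\Omega)}=1$. For any $x$ whose geodesic $[o,x]$ branches off from $[o,v]$ at level $j<m$, the entire support of $G$ lies in the single shell $S_j(x)$, hence $\beta_j(x)=1$ and your majorant is $M(x)=C\,q^{(j-|x|)/r}$. There are about $q^{|x|-j}$ such vertices at each level $|x|=n>j$, and $\sum_{n>j}q^{n-j}\,q^{(j-n)p'/r}$ is a convergent geometric series (here $r<p'$ is used), so each $j\in\{0,1,\dots,m-1\}$ contributes a fixed positive amount to $\sum_x M(x)^{p'}$; therefore $\|M\|_{L^{p'}(\mathfrak{X})}^{p'}\geq C\,m\to\infty$ while $\|G\|_{L^{r'}(\Omega)}$ stays equal to $1$. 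The operator itself is bounded on this example: the true value is $\mathcal{P}_{i\delta_{r'}}G(x)\sim q^{(2j-|x|-m)/r}$, smaller than $M(x)$ by the factor $q^{(j-m)/r}$ that H\"older discards because $G$ occupies only a $q^{j-m}$-fraction of the shell $S_j(x)$.

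So the missing idea is precisely how to perform the summation over $\mathfrak{X}$ without this loss: one must keep the quantities $\int_{S_k(x)}G\,d\nu$ intact and organize the double sum by duality or a Schur-type test with the correct weights, or else abandon the pointwise majorant altogether and run an interpolation argument on the analytic family $z\mapsto\mathcal{P}_z$ between an $L^2(\Omega)$-based bound and the $L^\infty(\Omega)\to L^{p'}(\mathfrak{X})$ endpoint you mention. You gesture at both options but carry out neither, and since this summation is the entire content of the proposition, the proposal does not yet constitute a proof.
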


\subsection{Spectrum of Laplacian}
Keeping in mind that chaoticity is largely dependent on the  $L^{p}$-spectrum of the Laplacian, we now recall some its important facts. The Laplacian $\mathcal{L}$ on $\mathfrak{X}$ is defined by
\be\label{laplacian} \mathcal Lf(x)=f(x)-\frac{1}{q+1}\sum\limits_{y:d(x,y)=1}f(y).\ee
It is easy to show that $\cL$ is a bounded operator from $L^p(\x)$ into itself.
It follows from Lemma \ref{analytical phi} above that for $p\in(2,\infty)$ and $z\in S_{p}^{\circ},$  $\phi_{z}$ are the $L^{p}$ -eigenfunctions of $\mathcal{L}$ and hence $\gamma(S^{\circ}_{p})$  lies inside the set $P(\sigma_p(\mathcal L))$ of $L^p$-point spectrum of $\mathcal{L}$. In fact $\gamma(S^{\circ}_{p})$ is exactly the set $P(\sigma_p(\mathcal L))$. To prove this, let us assume that there exists a non-zero function $u$ in $L^{p}(\mathfrak{X})$ such that $\mathcal{L}u=\gamma(z)u$ for some $z\notin S^{\circ}_{p}$. Suppose that $u(x_o)\neq 0$ for some $x_o\in \mathfrak X$. Then $f(x)=\int_Ku(x_okx)dk$ is a radial, eigenfunction of $\mathcal{L}$ with eigenvalue $\gamma(z)$ and hence by Lemma \ref{analytical phi}, $f$ is a constant multiple $\phi_z$. It is easy to show that $f\in L^p(\x)^\#$, which is clearly not possible since $\phi_{z}\notin L^{p}(\mathfrak{X})^\#$ for $z\notin S^{\circ}_{p}$ (by Lemma \ref{analytical phi}). In fact using Lemma \ref{analytical phi} and a similar technique as above, one can completely summarize the $L^p$-point spectrum of $\mathcal{L}$ as follows:
 \begin{Proposition}\label{lp-pointspectrum}
	Regarding the $L^{p}$-point spectrum of $\mathcal{L}$, we have the following results.
	\begin{enumerate}
		\item[(i)] For $1\leq p\leq2$, the point spectrum of $\mathcal{L}$ on $L^{p}(\mathfrak{X})$ is empty.
			\item[(ii)] For $2<p<\infty$, the point spectrum of $\mathcal{L}$ on $L^{p}(\mathfrak{X})$  is the set $\gamma(S^{\circ}_{p})$.
	\end{enumerate}
\end{Proposition}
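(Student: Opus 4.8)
The plan is to reduce both statements to the $L^p$-membership of the single radial eigenfunction $\phi_z$, via the spherical-averaging (radialization) argument already sketched for part (ii) above, and then to read off the conclusion from Lemma \ref{analytical phi}. The key point throughout is that, because $\mathcal L$ is $G$-invariant, any eigenfunction can be averaged over the compact stabilizer $K$ into a radial eigenfunction with the same eigenvalue, and radial eigenfunctions are completely understood.

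For part (ii) I would first prove $\gamma(S_p^\circ)\subseteq P(\sigma_p(\mathcal L))$: for $z\in S_p^\circ$ with $2<p<\infty$, Lemma \ref{analytical phi}(iv) (applied with $p$ replaced by $p'\in(1,2)$ and using $S_{p'}=S_p$) gives $\phi_z\in L^p(\x)^\#$, while Lemma \ref{analytical phi}(i) gives $\mathcal L\phi_z=\gamma(z)\phi_z$; hence $\gamma(z)$ is an $L^p$-eigenvalue. For the reverse inclusion I would take $\lambda\in P(\sigma_p(\mathcal L))$, say $\mathcal Lu=\lambda u$ with $0\neq u\in L^p(\x)$, write $\lambda=\gamma(z)$ (possible since $\gamma$ is onto $\C$), choose a vertex $x_o$ with $u(x_o)\neq0$, and set $f(x)=\int_K u(x_okx)\,dk$. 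A short check shows $f$ is radial, that $f(o)=u(x_o)\neq0$ because $k\cdot o=o$, that $\mathcal Lf=\gamma(z)f$ since $\mathcal L$ commutes with the translations appearing in the integrand, and that $f\in L^p(\x)$ by Minkowski's integral inequality together with the fact that those translations are $L^p$-isometries. Lemma \ref{analytical phi}(i) then forces $f=c\phi_z$ with $c\neq0$, so $\phi_z\in L^p(\x)^\#$, and for $2<p<\infty$ the same reformulation of Lemma \ref{analytical phi}(iv) forces $z\in S_p^\circ$; thus $\lambda\in\gamma(S_p^\circ)$.

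Part (i) follows from the identical radialization: if $\lambda\in P(\sigma_p(\mathcal L))$ for some $1\leq p\leq2$, the construction again yields a $z$ with $\phi_z\in L^p(\x)$. I would rule this out for every $z\in\C$. Lemma \ref{analytical phi}(v) disposes of $z\in S_p$, and for $z\notin S_p$ I would argue directly from the asymptotics in (\ref{equation2.4}): the dominant term of $\phi_z$ decays like $q^{-(1/2-|\Im z|)|x|}$ whereas the sphere $\{x:|x|=n\}$ has cardinality comparable to $q^n$, so $\phi_z\in L^p(\x)$ would require $1/2-|\Im z|>1/p$, that is $|\Im z|<-\delta_p$, which is impossible when $\delta_p\geq0$. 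Hence no such eigenfunction exists and $P(\sigma_p(\mathcal L))=\emptyset$.

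I expect the only genuine obstacle to be the radialization step, namely verifying simultaneously that the $K$-average $f$ is radial, lies in $L^p(\x)$, carries the same eigenvalue, and (most importantly) is not identically zero. The non-vanishing is precisely why the average is centred at a vertex $x_o$ where $u(x_o)\neq0$, giving $f(o)=u(x_o)\neq0$. Once $f$ is known to be a nonzero radial $L^p$-eigenfunction, everything reduces to the one-dimensionality of the radial eigenspace (Lemma \ref{analytical phi}(i)) and the elementary summability computation for $\phi_z$ recorded above.
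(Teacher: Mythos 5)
Your proposal is correct and follows essentially the same route as the paper: radialize a putative eigenfunction over $K$ about a vertex where it does not vanish, identify the result with a multiple of $\phi_z$ via Lemma \ref{analytical phi}(i), and then invoke the $L^p$-membership criteria for $\phi_z$. The only cosmetic difference is that for part (i) with $z\notin S_p$ you estimate $\phi_z$ directly from (\ref{equation2.4}), whereas the paper leaves this to ``a similar technique'' resting on Lemma \ref{analytical phi} (e.g.\ via the nesting of $\ell^p$ spaces); both are fine.
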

The complete description of the $L^p$-spectrum of $\cL$ is given in the following Proposition. For details we refer
\cite{CMS,FTC,FTP2}.
  \begin{Proposition}\label{proposition2.6}
	For every $p\in[1,\infty]$, the $L^{p}$-spectrum $\sigma_{p}(\mathcal{L})$ of $\mathcal{L}$ is the image of $S_p$ under the map $\gamma$, which is precisely the set of all $w$ in $\mathbb{C}$ which satisfies
	\begin{equation}\label{lp-spectrum}
	\left[\frac{1-\Re(w)}{b\cosh(\delta_{p}\log q)}\right]^{2}+\left[\frac{\Im(w)}{b\sinh(\delta_{p}\log q)}\right]^{2}\leq 1,\text{ where }b=\frac{2\sqrt{q}}{q+1}.
	\end{equation}
	In particular, $\sigma_{2}(\mathcal{L})$ degenerates into the line segment $[1-b,1+b]$.
\end{Proposition}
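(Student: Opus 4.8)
The plan is to prove the two assertions of the Proposition separately: first the purely computational fact that $\gamma(S_p)$ is the elliptic region described by (\ref{lp-spectrum}), and then the operator-theoretic identity $\sigma_p(\cL)=\gamma(S_p)$.

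For the geometric description I would write $z=x+iy$ with $x\in\R$ and $|y|\le|\delta_p|$, and expand $\gamma$ using $q^{1/2\pm iz}=q^{1/2\mp y}(\cos(x\log q)\pm i\sin(x\log q))$. Collecting real and imaginary parts, and writing $b=2\sqrt q/(q+1)$, gives
\be
\Re\gamma(z)=1-b\cos(x\log q)\cosh(y\log q),\qquad \Im\gamma(z)=b\sin(x\log q)\sinh(y\log q).
\ee
Hence for each fixed $y$ the identity $\cos^2+\sin^2=1$ shows that $x\mapsto\gamma(x+iy)$ traces out the full ellipse centred at $1$ with semi-axes $b\cosh(y\log q)$ and $b|\sinh(y\log q)|$; since $\cosh^2-\sinh^2=1$, all these ellipses are confocal with common foci $1\pm b$. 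As $|y|$ increases from $0$ to $|\delta_p|$ the degenerate ellipse $[1-b,1+b]$ (at $y=0$) grows through this nested confocal family out to the outermost ellipse at $|y|=|\delta_p|$, and the union of the family is exactly the closed solid region (\ref{lp-spectrum}). Taking $y=0$ alone yields the degeneration of $\gamma(\R)=\sigma_2(\cL)$ to the segment $[1-b,1+b]$.

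It remains to identify $\sigma_p(\cL)$ with $\gamma(S_p)$. For the inclusion $\gamma(S_p)\subseteq\sigma_p(\cL)$ with $2<p<\infty$, I would invoke Proposition \ref{lp-pointspectrum}: the point spectrum already equals $\gamma(S_p^\circ)$, whose closure is the full closed region $\gamma(S_p)$; since the spectrum is closed and contains the point spectrum, the inclusion follows. The case $p=2$ is handled by self-adjointness of $\cL$ on $L^2(\x)$ together with the spherical Plancherel theorem, under which $\cL$ is unitarily equivalent to multiplication by $\gamma$ on the Plancherel space, so that $\sigma_2(\cL)$ is the essential range $\gamma(\R)=[1-b,1+b]$. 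The remaining exponents $1\le p<2$ (and the endpoints $p=1,\infty$) then follow by duality: $\cL$ is symmetric, so its Banach-space adjoint on $L^{p'}(\x)$ is again $\cL$, whence $\sigma_p(\cL)=\sigma_{p'}(\cL)$, and the equality $S_p=S_{p'}$ gives $\gamma(S_p)=\gamma(S_{p'})$.

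For the reverse inclusion $\sigma_p(\cL)\subseteq\gamma(S_p)$ I would construct the resolvent explicitly. Writing $\cL=I-A$ with $A$ the nearest-neighbour averaging operator, $\cL-wI$ is convolution by a finitely supported radial kernel, and for $w\notin\gamma(S_p)$ the Green's function solving $(\cL-wI)G_w=\delta_o$ is an explicit radial function decaying geometrically in $|x|$, the decay rate being governed by the (complex) solution $z$ of $\gamma(z)=w$. The key point is that $\gamma-w$ is bounded away from $0$ on the strip $S_p$ precisely when $w\notin\gamma(S_p)$, and under the spherical Fourier transform this makes $(\gamma-w)^{-1}$ an admissible $L^p$-multiplier; equivalently, one checks directly that convolution by $G_w$ is bounded on $L^p(\x)$ by a Kunze--Stein/Haagerup-type convolution estimate, using that the geometric decay of $G_w$ beats the critical rate $q^{-|x|/p}$ attached to $\delta_p$. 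I expect this resolvent/multiplier boundedness to be the main obstacle, since it is exactly where the $p$-dependent strip $S_p$ enters and where the sharp $L^p$-convolution inequalities on $\x$ (as in \cite{CMS,FTP2}) are required; the earlier parts are comparatively soft once these estimates are in hand.
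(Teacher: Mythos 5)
The paper itself offers no proof of this Proposition --- it is quoted from the literature with the pointer ``for details we refer \cite{CMS,FTC,FTP2}'' --- so there is no in-paper argument to measure you against; what you have written is essentially a reconstruction of the standard proof from those references, and it is sound in outline. The computational half is complete and correct: your formulas $\Re\gamma(x+iy)=1-b\cos(x\log q)\cosh(y\log q)$ and $\Im\gamma(x+iy)=b\sin(x\log q)\sinh(y\log q)$ are right, and the confocal-ellipse argument (common foci $1\pm b$, nested as $|y|$ grows, union over $|y|\leq|\delta_p|$ equal to the closed region bounded by the outermost ellipse) does give exactly (\ref{lp-spectrum}), degenerating to $[1-b,1+b]$ at $y=0$. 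The architecture of the spectral identification --- $\gamma(S_p^\circ)\subseteq P\sigma_p(\cL)$ plus closedness of the spectrum for $2<p<\infty$, Plancherel and self-adjointness for $p=2$, duality via $\cL'=\cL$ for $p<2$, and an explicit Green's function for the inclusion $\sigma_p(\cL)\subseteq\gamma(S_p)$ --- is the right one and matches \cite{FTP2,CMS}. Two points deserve tightening. First, duality only yields $\sigma_1(\cL)=\sigma_\infty(\cL)$, i.e.\ it identifies the two endpoint spectra with each other rather than computing either; one of them needs a direct argument, e.g.\ on $L^\infty(\x)$ the bounded eigenfunctions $\phi_z$, $z\in S_1$, give $\gamma(S_1)\subseteq\sigma_\infty(\cL)$, with the reverse inclusion again coming from the resolvent. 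Second, the resolvent step is asserted rather than carried out, and it is indeed where all the work lies; the mechanism you name is the correct one --- for $w\notin\gamma(S_p)$, $p\geq2$, choose the root $z$ of $\gamma(z)=w$ with $\Im z>|\delta_p|$, so that $|G_w(x)|\asymp q^{-(\Im z+1/2)|x|}$ with $\Im z+\tfrac12>1-\tfrac1p$, which puts $G_w$ strictly inside the range where the radial convolution inequality $\|f\ast k\|_{L^p(\x)}\leq C\|k\|_{L^{p,1}(\x)}\|f\|_{L^p(\x)}$ of \cite{CMS} applies --- but you should also verify that convolution with $G_w$ actually inverts $\cL-w$, a short computation with the radial recurrence for $\cL$ whose normalizing constant is finite precisely because $w$ avoids the focal segment $[1-b,1+b]$. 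With those two items filled in, your proof is complete.
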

The following elliptic region represents the $L^p$ spectrum of $\mathcal{L}$.
\begin{figure}[h]
\centering
\begin{tikzpicture}[scale=0.7]
\tikzset{edge/.style={decoration={markings,mark=at position 1 with {\arrow[scale=2,>=stealth]{>}}}, postaction={decorate}}}
\draw[edge] (0,0)--(8.5,0) node[below] {$X$};
\draw[edge] (4,-3)--(4,3) node[left] {$Y$};
\draw (0,2)--(8,2);
\draw (0,-2)--(8,-2);
\node[above right] at (4,2) {$|\delta_{p}|$};
\node[below left] at (4,-2) {$-|\delta_{p}|$};
\node[below left] at (4,0) {$O$};
\draw[edge] ([shift=(105:6)]11,-5) arc (105:75:6);
\draw[edge] (13.5,0)--(22,0) node[below] {$X$};
\draw[edge] (14.5,-3)--(14.5,3) node[left] {$Y$};
\draw (18,0) ellipse (3 cm and 2 cm);
\node[above] at (11,1) {$\gamma$};
\node at (6,1) {$S_p$};
\node[above] at (18,2) {$\gamma(S_p)$};
\node[above] at (18,0) {$(1,0)$};
\node[above right] at (15,0) {$\gamma(i|\delta_{p}|)$};
\node[below left] at (21,0) {$\gamma(\tau/2+i|\delta_{p}|)$};
\node[below left] at (14.5,0) {$O$};
\foreach \i in {15,21,18}{
\filldraw (\i,0) circle (2 pt);
}
\foreach \i in {2,-2}{
\filldraw (4,\i) circle (2 pt);
}
\end{tikzpicture}
\caption{$L^p$-spectrum of $\mathcal{L}$.}
\end{figure}
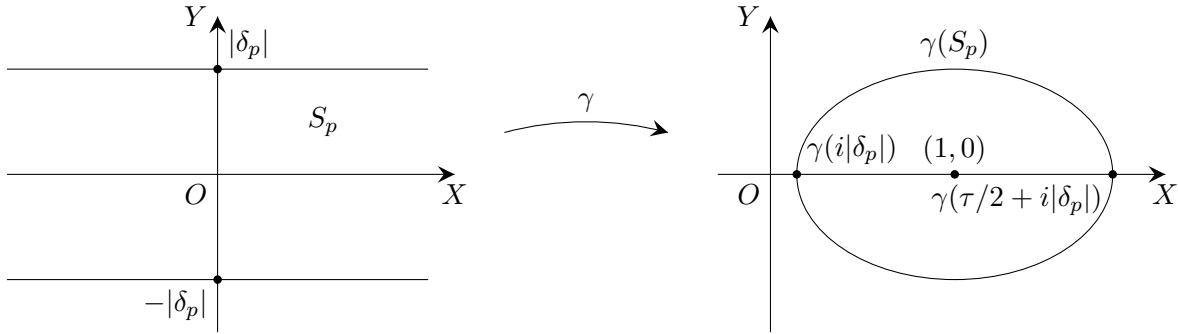

Note that for $2<p<\infty$, the open elliptic region in this figure also represents the $L^p$-point spectrum of $\mathcal{L}$ (i.e., $\gamma(S_{p}^\circ)$).
\subsection{The Helgason-Fourier Transform}
The Helgason-Fourier transform $\wtilde{f}$ of a finitely supported function $f$ is a function on $\mathbb \C\times\Omega$ defined by the formula
\begin{equation}\label{equation2.11}
	\wtilde{f}(z,\omega)=\sum\limits_{x\in\x}f(x)p^{1/2+iz}(x,\omega).
\end{equation}
For a finitely supported function $f$ on $\x$ and a continuous function $F$ on $\Omega$, we have
\begin{equation}\label{equation2.12}
\int\limits_{\Omega}\tilde{f}(z,\omega)F(\omega)d\nu(\omega)=\sum\limits_{x\in\mathfrak{X}}f(x)\left(\int_\Omega p^{1/2+iz}(x,\omega)F(\omega)d\nu(\omega)\right)=\sum\limits_{x\in\mathfrak{X}}f(x)\mathcal{P}_{z}F(x).
\end{equation}
Using the estimate of Poisson transform (see Proposition \ref{poisson estimates}) and the above duality relation, the authors of this paper proved the following (see \cite{KR}):

\begin{Theorem}\label{restriction}
 Let $1<p<2$ and $f\in L^{p}(\mathfrak{X})$. For $p<r<p'$ and $z\in\mathbb{C}$ with $\Im z=\delta_{r^{\prime}},$ there exists a constant $C_{p,r}>0$ such that
	\begin{equation}\label{equation2.13}
	\|\wtilde{f}(z,\cdot)\|_{L^{r}(\Omega)}\leq C_{p,r}\|f\|_{L^{p}(\mathfrak{X})}.
	\end{equation}
\end{Theorem}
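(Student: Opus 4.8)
The plan is to derive this restriction estimate by duality from the Poisson transform bound of Proposition \ref{poisson estimates}. For each fixed $z$, the Helgason--Fourier slice $f \mapsto \wtilde{f}(z,\cdot)$ and the Poisson transform $F \mapsto \mathcal{P}_z F$ are formal adjoints of one another with respect to the bilinear pairings on $\x$ and on $\Omega$; this is exactly the content of the duality relation (\ref{equation2.12}). Consequently the mapping property $\mathcal{P}_z : L^{r'}(\Omega) \to L^{p'}(\x)$ furnished by Proposition \ref{poisson estimates} (valid precisely when $\Im z = \delta_{r'}$ with $p < r < p'$) is, after passing to adjoints, equivalent to the asserted bound for $f \mapsto \wtilde{f}(z,\cdot)$ from $L^p(\x)$ into $L^r(\Omega)$. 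Since $(p')' = p$ and $(r')' = r$, the exponents align exactly with the hypotheses.

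First I would fix a finitely supported $f$ on $\x$, so that the sum defining $\wtilde{f}(z,\omega)$ is finite; because each kernel $p^{1/2+iz}(x,\cdot) = q^{(1/2+iz)h_\omega(x)}$ is bounded on $\Omega$, the slice $\wtilde{f}(z,\cdot)$ is then a bounded, hence $L^r$, function on the probability space $(\Omega,\nu)$. Next I would compute its $L^r(\Omega)$ norm by duality against $L^{r'}(\Omega)$, restricting the supremum to continuous test functions $F$, which are dense in $L^{r'}(\Omega)$ since $r > 1$:
\[
\|\wtilde{f}(z,\cdot)\|_{L^r(\Omega)} = \sup\left\{\left|\int_\Omega \wtilde{f}(z,\omega) F(\omega)\, d\nu(\omega)\right| : F \text{ continuous},\ \|F\|_{L^{r'}(\Omega)} \leq 1\right\}.
\]
The central step is then to invoke (\ref{equation2.12}) to rewrite the inner integral as $\sum_{x \in \x} f(x)\, \mathcal{P}_z F(x)$, and to estimate this by Hölder's inequality on $\x$ followed by Proposition \ref{poisson estimates}:
\[
\left|\sum_{x \in \x} f(x)\, \mathcal{P}_z F(x)\right| \leq \|f\|_{L^p(\x)}\, \|\mathcal{P}_z F\|_{L^{p'}(\x)} \leq C_{p,r}\, \|f\|_{L^p(\x)}\, \|F\|_{L^{r'}(\Omega)}.
\]
Taking the supremum over the unit ball of $L^{r'}(\Omega)$ yields (\ref{equation2.13}) for finitely supported $f$. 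To conclude, I would remove the finite-support hypothesis: finitely supported functions being dense in $L^p(\x)$ for $1 \leq p < \infty$, the slice transform extends uniquely from this dense subspace to a bounded operator $L^p(\x) \to L^r(\Omega)$ obeying the same bound, agreeing with $\wtilde{f}(z,\cdot)$ for every $f \in L^p(\x)$.

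I do not expect a serious obstacle here, since the genuine analytic work is already encapsulated in Proposition \ref{poisson estimates} (established in \cite{KR}); the remaining content is merely the recognition that (\ref{equation2.13}) is its dual formulation. The only points demanding care are the justification that the pairing (\ref{equation2.12}), originally stated for finitely supported $f$ and continuous $F$, remains adequate after the two density reductions (continuous functions dense in $L^{r'}(\Omega)$, and finitely supported functions dense in $L^p(\x)$), together with the verification that $r \in (1,\infty)$ so that the duality computation of the $L^r(\Omega)$ norm is legitimate. Both are immediate in the stated range $1 < p < 2$, $p < r < p'$.
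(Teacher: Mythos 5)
Your proposal is correct and is essentially the argument the paper indicates: the theorem is obtained by dualizing the Poisson transform bound of Proposition \ref{poisson estimates} through the pairing (\ref{equation2.12}), exactly as you describe. The only remark worth adding is that for general $f\in L^{p}(\mathfrak{X})$ the slice $\wtilde{f}(z,\cdot)$ is \emph{defined} by the density extension you perform in the last step (the paper makes this point immediately after the theorem), so the final ``agreeing with $\wtilde{f}(z,\cdot)$'' clause is a matter of definition rather than something requiring verification.
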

The above expression is a `restriction type' inequality which shows that for each $z\in S_{p}^{\circ}$ with $z=\alpha+i\delta_{r^{\prime}}$, $\wtilde{f}(z,\cdot)$ exists as a measurable function in $L^{r}(\Omega)$. Regarding the analyticity of $\wtilde{f}(z,\cdot)$, we have the following result.

\begin{Lemma}\label{analytic} Suppose $f\in L^{p}(\mathfrak{X})$ for $1<p<2$. Then for every $\omega\in\Omega$, the map $z\rightarrow\wtilde{f}(z,\omega)$ is analytic on $S_{p}^{\circ}$.
	
\end{Lemma}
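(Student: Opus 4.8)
The plan is to reduce first to finitely supported functions, where $\wtilde{f}(z,\omega)$ is manifestly analytic, and then to propagate analyticity to the limit, using the restriction estimate of Theorem \ref{restriction} as a substitute for the (generally false) absolute convergence of the defining series. To begin, if $f$ is finitely supported then by (\ref{equation2.11}) the transform $\wtilde{f}(z,\omega)=\sum_{x}f(x)\,q^{(1/2+iz)h_\omega(x)}$ is a finite sum of the entire functions $z\mapsto q^{(1/2+iz)h_\omega(x)}=e^{(1/2+iz)h_\omega(x)\log q}$, so $z\mapsto\wtilde{f}(z,\omega)$ is entire for every $\omega\in\Omega$. For general $f\in L^{p}(\x)$ I would set $f_n=f\cdot\mathbf 1_{\{|x|\le n\}}$, so that each $f_n$ is finitely supported and $f_n\to f$ in $L^{p}(\x)$.

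Next I would fix a closed disc $\overline{D}=\overline{D(z_0,\rho)}\subset S_p^\circ$ and note that as $z$ ranges over $\overline D$ its imaginary part ranges over a compact subinterval of $(-\delta_p,\delta_p)$, which by $\delta_{r'}=\tfrac12-\tfrac1r$ corresponds to a compact subinterval of exponents $r\in(p,p')$; let $r_0>1$ be the smallest of these. Applying Theorem \ref{restriction} on each horizontal line meeting $\overline D$ (where the constant is independent of $\Re z$), and using $\|\cdot\|_{L^{r_0}(\Omega)}\le\|\cdot\|_{L^{r}(\Omega)}$ since $\nu$ is a probability measure, I would obtain a constant $C=C(p,\overline D)$ with $\|\wtilde{g}(z,\cdot)\|_{L^{r_0}(\Omega)}\le C\|g\|_{L^{p}(\x)}$ for every finitely supported $g$ and every $z\in\overline D$. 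Applied to $g=f_n-f_m$, this shows that $z\mapsto\wtilde{f_n}(z,\cdot)$ is Cauchy in $L^{r_0}(\Omega)$, \emph{uniformly} for $z\in\overline D$, and its limit is precisely the function $\wtilde f(z,\cdot)\in L^{r_0}(\Omega)$ furnished by Theorem \ref{restriction}.

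Finally I would upgrade this to analyticity. Each $z\mapsto\wtilde{f_n}(z,\cdot)$ is an $L^{r_0}(\Omega)$-valued holomorphic (indeed entire) map, and by the previous step these converge uniformly on $\overline D$ in $L^{r_0}(\Omega)$-norm; hence the limit $z\mapsto\wtilde f(z,\cdot)$ is $L^{r_0}(\Omega)$-valued holomorphic on $S_p^\circ$, since a locally uniform limit of Banach-space-valued holomorphic maps is holomorphic (e.g. by Morera together with Hahn--Banach). To pass from vector-valued to pointwise analyticity I would use the Cauchy integral formula: writing $a_k(\cdot)=\frac{1}{2\pi i}\oint_{|\zeta-z_0|=\rho}\wtilde f(\zeta,\cdot)(\zeta-z_0)^{-k-1}\,d\zeta\in L^{r_0}(\Omega)$ gives $\|a_k\|_{L^{r_0}(\Omega)}\le M\rho^{-k}$, so that $\sum_k\|a_k\|_{L^{1}(\Omega)}t^k\le\sum_k\|a_k\|_{L^{r_0}(\Omega)}t^k<\infty$ for $t<\rho$; by Tonelli this forces $\sum_k|a_k(\omega)|t^k<\infty$ for a.e.\ $\omega$, whence $\wtilde f(z,\omega)=\sum_k a_k(\omega)(z-z_0)^k$ is a convergent power series and $z\mapsto\wtilde f(z,\omega)$ is analytic near $z_0$ for (a.e., and after fixing these canonical representatives, every) $\omega$.

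The main obstacle is conceptual rather than computational. For $1<p<2$ and a \emph{single} boundary point $\omega$, the defining series (\ref{equation2.11}) need not converge absolutely when $f\in L^{p}(\x)$, since the horocyclic counting makes $\sum_x q^{(1/2-\Im z)h_\omega(x)}$ diverge for \emph{every} real value of the exponent. Consequently $\wtilde f(z,\omega)$ must be handled through the $L^{r}(\Omega)$-valued object of Theorem \ref{restriction}, and the real work lies in (a) securing a restriction constant that is uniform as $z$ moves through a compact subset of the strip, where the Lebesgue exponent $r$ itself varies, and (b) transferring the resulting Banach-space-valued holomorphy into genuine pointwise-in-$\omega$ analyticity.
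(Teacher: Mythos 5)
The paper states Lemma \ref{analytic} without proof (it sits next to Theorem \ref{restriction}, which is quoted from \cite{KR}), so there is no in-text argument to measure yours against; judged on its own, your proof is essentially correct and is the natural way to derive the lemma from what the paper supplies. You correctly identify the real difficulty: for $f\in L^{p}(\x)$ with $1<p<2$ the series (\ref{equation2.11}) need not converge absolutely at a fixed $\omega$, so $\wtilde f(z,\cdot)$ exists only as the $L^{r}(\Omega)$-limit of the transforms of the truncations $f_n$, and the content of the lemma is that this family of equivalence classes admits representatives analytic in $z$. Your three steps --- entirety for finitely supported functions, uniform-on-compacta convergence in $L^{r_0}(\Omega)$ via Theorem \ref{restriction}, and conversion of Banach-valued holomorphy into pointwise analyticity through the Cauchy coefficients $a_k$ and Tonelli --- are all sound.

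Two caveats. First, your constant $C(p,\overline D)$ tacitly requires $\sup_{r\in[r_1,r_2]}C_{p,r}<\infty$ as $\Im z$, and hence the exponent $r$, sweeps over the compact range determined by $\overline D$; Theorem \ref{restriction} as stated gives a constant only for each fixed $r$, so you should note that local boundedness of $C_{p,r}$ in $r$ (away from the endpoints $r=p,p'$) has to be read off from the proof of Proposition \ref{poisson estimates} in \cite{KR} --- it does hold there, but it is not literally contained in the statement you are citing. Second, your conclusion is ``for almost every $\omega$'' (you must discard the null set where $\sum_k|a_k(\omega)|t^k$ diverges, and patch the countably many discs covering $S_p^\circ$), not ``for every $\omega$'' as the lemma literally asserts. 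Since $\wtilde f(z,\cdot)$ is only an equivalence class for non-finitely-supported $f$, the almost-everywhere formulation is the only meaningful one, and it is exactly what is used in Step 3 of the proof of Theorem A (where the relevant set $V_1$ of spectral parameters is countable); so this is a defect of the lemma's phrasing rather than of your argument, but it is worth making the quantifiers explicit.
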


We conclude this section by providing the Plancherel Theorem which we shall require further.
\begin{Theorem}[Plancherel Theorem, \cite{FTP2}]\label{Plancherel}
	The Helgason--Fourier transform is an isometry from $L^{2}(\mathfrak{X})$ into $L^{2}([-\tau/2,\tau/2)\times\Omega,\mu\times\nu)$, that is, for $f\in L^{2}(\mathfrak{X})$,
	$$\|f\|^{2}_{L^{2}(\mathfrak{X})}=\int\limits_{-\tau/2}^{\tau/2}\int\limits_{\Omega}|\widetilde{f}(s,\omega)|^{2}d\nu(\omega)d\mu(s),$$
where $\mu$ denotes the Plancherel measure whose density with respect to the Lebesgue measure is given by $\tau^{-1}(q+1)^{-1}q/2|\mathbf c(s)|^{-2}$.
\end{Theorem}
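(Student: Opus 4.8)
The plan is to deduce the full Helgason--Fourier Plancherel formula from the radial (spherical) Plancherel theorem, by the classical device of passing to $f^{*}\ast f$. By density it suffices to prove the identity for finitely supported $f$, for which $\wtilde f(s,\omega)=\sum_{x}f(x)p^{1/2+is}(x,\omega)$ is a genuine finite sum and every interchange of summation and integration below is trivially justified; the isometry then extends to all of $L^{2}(\x)$ because finitely supported functions are dense. First I would realize $\x=G/K$, lift $f$ to a right-$K$-invariant function on $G$, and set $f^{*}(g)=\overline{f(g^{-1})}$ and $h=f^{*}\ast f$. Since $f^{*}$ is left-$K$-invariant and $f$ is right-$K$-invariant, $h$ is bi-$K$-invariant, i.e.\ radial, and a one-line computation with the Haar measure gives
\[
h(o)=\sum_{x\in\x}|f(x)|^{2}=\|f\|_{L^{2}(\x)}^{2}.
\]

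The crux is the reproducing identity for the Poisson kernel. Writing $x=g\cdot o$ and $y=g'\cdot o$ and using that $\overline{p^{1/2+is}(y,\omega)}=p^{1/2-is}(y,\omega)$ for real $s$ (as $p>0$), I would show
\[
\int_{\Omega}p^{1/2+is}(x,\omega)\,p^{1/2-is}(y,\omega)\,d\nu(\omega)=\phi_{s}(g^{-1}g'\cdot o).
\]
This follows from the cocycle behaviour of the height function $h_{\omega}$ (equivalently the identity $p(gx,g\omega)=p(x,\omega)\,p(g\cdot o,g\omega)$) together with the quasi-invariance of $\nu$, whose Radon--Nikodym derivative is the Poisson kernel itself: the change of variables $\omega\mapsto g\omega$ collapses the integral to something depending only on the relative position of $x$ and $y$, which one recognizes as the spherical function $\phi_{s}$ (even in $s$, so the sign is immaterial). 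Summing against $f(x)\overline{f(y)}$ and reorganizing the double sum as a convolution then yields
\[
\int_{\Omega}|\wtilde f(s,\omega)|^{2}\,d\nu(\omega)=\sum_{x,y}f(x)\overline{f(y)}\,\phi_{s}(g^{-1}g'\cdot o)=\sum_{z\in\x}h(z)\phi_{s}(z)=\widehat h(s),
\]
the spherical transform of the radial function $h$.

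It remains to invoke spherical inversion at the origin for the radial function $h$. The Harish-Chandra inversion formula gives $h(o)=\int_{-\tau/2}^{\tau/2}\widehat h(s)\phi_{s}(o)\,d\mu(s)=\int_{-\tau/2}^{\tau/2}\widehat h(s)\,d\mu(s)$, since $\phi_{s}(o)=1$. On a tree this base case is itself elementary: by the periodicity $\phi_{s+\tau}=\phi_{s}$ and the explicit exponential shape $\mathbf c(s)q^{(is-1/2)|x|}+\mathbf c(-s)q^{(-is-1/2)|x|}$ of $\phi_{s}$ from (\ref{equation2.4}), the spherical transform is a Fourier series on $[-\tau/2,\tau/2)$, and Parseval produces both the inversion formula and the exact Plancherel density $\tau^{-1}(q+1)^{-1}(q/2)|\mathbf c(s)|^{-2}$. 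Combining the three displays gives
\[
\|f\|_{L^{2}(\x)}^{2}=h(o)=\int_{-\tau/2}^{\tau/2}\widehat h(s)\,d\mu(s)=\int_{-\tau/2}^{\tau/2}\int_{\Omega}|\wtilde f(s,\omega)|^{2}\,d\nu(\omega)\,d\mu(s),
\]
and a standard polarization recovers the sesquilinear version if desired.

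I expect the main obstacle to be the reproducing identity: tracking the cocycle and the quasi-invariance factor so that the Poisson integral genuinely collapses to $\phi_{s}$ at the relative position is the technical heart, and it is exactly the place where the boundary measure $\nu$ and the normalization of $p$ must be handled carefully. The secondary bookkeeping obstacle is pinning down the precise constant $\tau^{-1}(q+1)^{-1}(q/2)$ in the Plancherel density from the explicit $\mathbf c$-function in the radial base case; everything else is either a density argument or a routine reorganization of finite sums.
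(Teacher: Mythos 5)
This theorem is not proved in the paper at all: it is imported verbatim from Fig\`a-Talamanca--Picardello \cite{FTP2}, so there is no in-paper argument to compare against. Your outline is, in substance, the classical proof of that cited result, and I see no gap in the overall architecture: the reduction to finitely supported $f$, the symmetrization identity $\int_{\Omega}p^{1/2+is}(x,\omega)\,p^{1/2-is}(y,\omega)\,d\nu(\omega)=\phi_{s}(d(x,y))$ (which does follow from the cocycle relation for $p$ together with the quasi-invariance of $\nu$, exactly as you describe --- the exponents conspire so that the Radon--Nikodym factor cancels and the integral collapses to $\mathcal P_{s}1$ at the relative position), the reorganization $\int_{\Omega}|\wtilde f(s,\cdot)|^{2}d\nu=\widehat{(f^{*}\ast f)}(s)$, and finally spherical inversion at $o$. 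The one place where you understate the work is the last step: deriving the inversion formula with the precise density $\tau^{-1}(q+1)^{-1}(q/2)|\mathbf c(s)|^{-2}$ is not quite ``Parseval for a Fourier series,'' because $\widehat h(s)=\sum_{n\geq0}h(n)N(n)\phi_{s}(n)$ with $N(n)=(q+1)q^{n-1}$, and the exponentials $q^{\pm isn}$ enter with the $s$-dependent coefficients $\mathbf c(\pm s)$; one must multiply by $|\mathbf c(s)|^{-2}=\mathbf c(s)^{-1}\mathbf c(-s)^{-1}$ and carry out a genuine orthogonality/partial-fraction computation to see that only the $n=0$ term survives. That computation is precisely the content of the radial Plancherel theorem in \cite{FTP1,FTP2}, so your proof is best read as a correct reduction of the full Helgason--Fourier Plancherel formula to the radial one, rather than a self-contained derivation of the constant. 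With that caveat, the approach is sound and is the standard route.
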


\section{Proof of  Theorem A and Theorem B}
 To prove Theorem A and Theorem B,  we collect some key results which will be used very frequently.
As mentioned earlier, chaoticity of a semigroup is mainly triggered due to the abundance of its point spectrum. So, we need the following spectral mapping theorem (see \cite[Theorem 10.28, Theorem 10.33]{WR}).

\begin{Theorem}\label{theorem3.1}
	Suppose $T$ is a bounded linear operator on $L^{p}(\mathfrak{X})$ and $g$ is a non-constant complex holomorphic function defined on a connected open set containing $\sigma_{p}(T)$. Then we have the following.
	\begin{itemize}
		\item[(a)] $\sigma_{p}(g(T))=g(\sigma_{p}(T))$.
		\item[(b)] $P\sigma_{p}(g(T))=g(P\sigma_{p}(T))$.
	\end{itemize}
\end{Theorem}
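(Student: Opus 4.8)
The plan is to deduce both statements from two standard ingredients: the fact that the Riesz--Dunford holomorphic functional calculus $g\mapsto g(T)$ is a unital algebra homomorphism from the algebra of holomorphic functions defined on a neighborhood of $\sigma_p(T)$ into $\mathcal B(L^p(\x))$, and the elementary factorization theory of holomorphic functions. Recall that $g(T)=\frac{1}{2\pi i}\int_\Gamma g(\lambda)(\lambda I-T)^{-1}\,d\lambda$ for a cycle $\Gamma$ surrounding $\sigma_p(T)$ inside the domain of $g$, and that multiplicativity means $(g_1g_2)(T)=g_1(T)g_2(T)$. I will use repeatedly that $\sigma_p(T)$ is compact and that, since $g$ is non-constant holomorphic on a connected open set, the level sets of $g$ are discrete.

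For part (a) I would prove the two inclusions separately. For $g(\sigma_p(T))\subseteq\sigma_p(g(T))$, fix $\mu\in\sigma_p(T)$ and write $g(\lambda)-g(\mu)=(\lambda-\mu)h(\lambda)$ with $h$ holomorphic on the domain of $g$. The homomorphism property gives $g(T)-g(\mu)I=(T-\mu I)h(T)=h(T)(T-\mu I)$; were $g(T)-g(\mu)I$ invertible, this would furnish both a left and a right inverse for $T-\mu I$, contradicting $\mu\in\sigma_p(T)$. For the reverse inclusion, take $\nu\notin g(\sigma_p(T))$; then $g-\nu$ is non-vanishing on the compact set $\sigma_p(T)$, so $(g-\nu)^{-1}$ is holomorphic on a neighborhood of $\sigma_p(T)$ once the domain is shrunk to avoid the isolated zeros of $g-\nu$, and the homomorphism property yields $(g-\nu)^{-1}(T)(g(T)-\nu I)=I=(g(T)-\nu I)(g-\nu)^{-1}(T)$, whence $\nu\notin\sigma_p(g(T))$.

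For part (b), the inclusion $g(P\sigma_p(T))\subseteq P\sigma_p(g(T))$ is immediate from the same factorization: if $Tv=\mu v$ with $v\neq0$, then $(g(T)-g(\mu)I)v=h(T)(T-\mu I)v=0$, so $g(\mu)$ is an eigenvalue of $g(T)$ with the same eigenvector. The reverse inclusion is the substantive step. Given $\nu\in P\sigma_p(g(T))$ with eigenvector $w\neq0$, part (a) guarantees that $g-\nu$ has at least one zero in $\sigma_p(T)$; let $\mu_1,\dots,\mu_k$ be all such zeros, finitely many by compactness and discreteness, with multiplicities $m_1,\dots,m_k$. Choosing an open neighborhood $U$ of $\sigma_p(T)$ whose only zeros of $g-\nu$ are the $\mu_j$, I would factor $g(\lambda)-\nu=u(\lambda)\prod_{j=1}^{k}(\lambda-\mu_j)^{m_j}$ on $U$ with $u$ holomorphic and non-vanishing on $U$. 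Then $u(T)$ is invertible, so $(g(T)-\nu I)w=0$ forces $\prod_{j}(T-\mu_j I)^{m_j}w=0$. Since $w\neq0$, the product operator is not injective, so at least one factor $T-\mu_{j_0}I$ fails to be injective; that is, $\mu_{j_0}\in P\sigma_p(T)$ and $g(\mu_{j_0})=\nu$, giving $\nu\in g(P\sigma_p(T))$.

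The main obstacle is exactly this last inclusion: one must separate the roots of $g-\nu$ lying in $\sigma_p(T)$ from those outside it and express $g(T)-\nu I$ as an invertible operator times the finite product $\prod_j(T-\mu_j I)^{m_j}$. The two points needing care are the existence of the neighborhood $U$ isolating the relevant zeros, which rests on compactness of $\sigma_p(T)$ together with discreteness of the level set of the non-constant holomorphic $g$, and the elementary observation that a finite product of commuting injective operators is injective, so that non-injectivity of the product localizes to a single factor and hence to a genuine eigenvalue of $T$.
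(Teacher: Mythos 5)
Your proof is correct and complete; the paper itself does not prove this statement but simply cites Rudin's \emph{Functional Analysis} (Theorems 10.28 and 10.33), and your argument --- the factorization $g(\lambda)-g(\mu)=(\lambda-\mu)h(\lambda)$ together with multiplicativity of the Riesz calculus for part (a), and the decomposition $g(\lambda)-\nu=u(\lambda)\prod_{j}(\lambda-\mu_j)^{m_j}$ with $u$ non-vanishing near $\sigma_p(T)$ for part (b) --- is precisely the standard proof given there. You also correctly isolate where the hypothesis that $g$ is non-constant on a connected domain enters (discreteness of the level set $\{g=\nu\}$, hence finiteness of its intersection with the compact $\sigma_p(T)$), which is exactly the point at which the reverse inclusion in (b) would otherwise fail.
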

For $1\leq p< \infty$ define
\be\label{origin} X_{0}=\{x\in L^p(\x):T(t)x\rightarrow 0\text{ as }t\rightarrow\infty\},\ee
\be \label{infinity} X_{\infty}=\{x\in L^p(\x):\forall\epsilon>0~\exists~w\in L^p(\x)\text{ and }t_{0}>0\text{ such that }\|w\|<\epsilon,\|T(t_{0})w-x\|<\epsilon\}.\ee
The following sufficient condition for hypercyclicity which was proved by Desch, Schappacher and Webb is useful in the sequel.
\begin{Proposition}\cite[Theorem 2.3]{DSW} \label{proposition2.2}
	Let $T(t)$, $t\geq 0$ be a strongly continuous semigroup on $L^p(\x)$ for $1\leq p< \infty$. If both the sets $X_0$ and $X_\infty$
	are dense in $L^p(\x)$, then $T(t)$ is hypercyclic.
\end{Proposition}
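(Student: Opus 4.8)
The plan is to obtain hypercyclicity from the \emph{Birkhoff transitivity theorem}, viewing Proposition \ref{proposition2.2} as the semigroup analogue of the classical Hypercyclicity (Kitai/Gethner--Shapiro) Criterion. Since $L^p(\x)$ is separable for $1\leq p<\infty$, everything reduces to verifying \emph{topological transitivity}: for every pair of nonempty open sets $U,V\subseteq L^p(\x)$ there is a time $t_0\geq 0$ with $T(t_0)U\cap V\neq\emptyset$. First I would record the short Baire-category step behind this reduction. Fixing a countable base $\{V_n\}$ of open sets, a vector $x$ is hypercyclic exactly when its orbit meets each $V_n$, so the set of hypercyclic vectors equals $\bigcap_n\bigcup_{t\geq 0}T(t)^{-1}(V_n)$. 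Each $T(t)^{-1}(V_n)$ is open by continuity of $T(t)$, hence each inner union is open; transitivity forces each union to be dense (given open $U$, a time $t$ with $T(t)U\cap V_n\neq\emptyset$ puts a point of $U$ in $T(t)^{-1}(V_n)$). Baire's theorem then makes the set of hypercyclic vectors a dense $G_\delta$, in particular nonempty.

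The heart of the argument is to deduce transitivity from the density of $X_0$ and $X_\infty$. Given nonempty open $U,V$, I would pick $x\in X_0\cap U$ and a \emph{nonzero} $y\in X_\infty\cap V$, which is possible because $V\setminus\{0\}$ is open and nonempty and $X_\infty$ is dense. For small $\epsilon>0$, the membership $y\in X_\infty$ supplies $w$ with $\|w\|<\epsilon$ and a time $t_0>0$ with $\|T(t_0)w-y\|<\epsilon$. Setting $u=x+w$ gives $\|u-x\|<\epsilon$, so $u\in U$ once $\epsilon$ is small, while
\[
\|T(t_0)u-y\|\leq\|T(t_0)x\|+\|T(t_0)w-y\|<\|T(t_0)x\|+\epsilon .
\]
Thus $T(t_0)u$ lands in $V$ as soon as the tail term $\|T(t_0)x\|$ is also small, which would finish the proof of transitivity.

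The main obstacle, and the step I would treat most carefully, is precisely this \emph{time-matching}: the $X_\infty$ condition hands us a prescribed $t_0$, whereas the $X_0$ condition only guarantees $\|T(t)x\|$ small for \emph{large} $t$, so a priori the two are incompatible. I would resolve it by exploiting $y\neq 0$ to show the approximating times must diverge. If the times $t_0=t_0(\epsilon)$ stayed bounded along some sequence $\epsilon\to 0$, then, since a strongly continuous semigroup is uniformly bounded on compact time-intervals, $\|T(t_0)w\|\leq C\|w\|\to 0$, whence $\|y\|\leq\|T(t_0)w-y\|+\|T(t_0)w\|<\epsilon+C\|w\|\to 0$, contradicting $y\neq 0$. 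Hence $t_0(\epsilon)\to\infty$, and the defining property $T(t)x\to 0$ of $x\in X_0$ yields $\|T(t_0(\epsilon))x\|\to 0$. Choosing $\epsilon$ small enough that simultaneously $\|T(t_0)x\|<\epsilon$, $\|w\|$ keeps $u$ in $U$, and $2\epsilon$ is below the radius of a ball about $y$ contained in $V$, the displayed estimate places $T(t_0)u$ in $V$. This establishes transitivity and, via the Baire step above, the hypercyclicity of $T(t)$.
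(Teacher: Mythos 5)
The paper does not actually prove this proposition --- it is imported verbatim from Desch--Schappacher--Webb \cite{DSW} --- so there is no internal proof to compare against; your argument is a correct, self-contained reconstruction along the standard lines of that reference: reduce hypercyclicity to topological transitivity via the Baire-category (Birkhoff) argument, then verify transitivity using the two dense sets. The one place where you genuinely add something beyond the textbook version is the time-matching step: the definition of $X_\infty$ used here only supplies an approximate preimage at \emph{some} time $t_0(\epsilon)$, rather than right inverses $S(t)$ defined for all large $t$ as in the original criterion, and your observation that $y\neq 0$ forces $t_0(\epsilon)\to\infty$ (via local uniform boundedness of a strongly continuous semigroup) is exactly what is needed to reconcile this with $x\in X_0$. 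One cosmetic point: the condition ``$\|T(t_0)x\|<\epsilon$'' you impose at the end need not be achievable, since $\|T(t_0(\epsilon))x\|$ could tend to $0$ more slowly than $\epsilon$; but this is harmless, because what your displayed estimate actually requires is only that $\|T(t_0(\epsilon))x\|+\epsilon\to 0$, hence is eventually smaller than the radius of a ball about $y$ contained in $V$. With that minor rephrasing the proof is complete and correct.
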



\subsection{Proof of Theorem A}	Fix $p\in(2,\infty)$. It follows from the definition that (1) implies (2) and (3). To complete the proof we only need to show that $(2)\implies (3)$ and $(3)\implies (1)$.

	\noindent We first prove  {(2)$\implies$(3):} For a clear understanding, we have divided this proof into the following steps.
	
	\noindent\textit{Step 1}:  In this step, we prove that $P\sigma_{p}(f(\mathcal{L}))\cap i\mathbb{R}$ is an infinite set. Condition (2) implies that there exists a nonzero function $h\in L^{p}(\mathfrak{X})$ such that $T(t_o)h=h$ for some $t_o>0,$ that is $1\in P\sigma_{p}(e^{t_of(\mathcal{L})})$. Using Proposition \ref{lp-pointspectrum} (ii) and Theorem \ref{theorem3.1} (b) we also have $P\sigma_{p}(e^{t_{0}f(\mathcal{L})})=e^{t_{0}f(\gamma(S_{p}^{\circ}))}$. Therefore there exists $z_{0}\in S_{p}^{\circ}$ such that $f(\gamma(z_{0}))=2n\pi i/t_o$ for some $n\in\mathbb{Z}$.

Let $\Gamma:S_{p}^{\circ}\rightarrow\mathbb{C}$ be defined by
	\begin{equation}\label{equation3.16}
	\Gamma(z)=(f\circ\gamma)(z)=f(\gamma(z)).
	\end{equation}
	It follows from the assumption on $f$ that $\Gamma$ is a non-constant holomorphic function on $S_{p}^{\circ}$.  Since $\Gamma (z_{0})=2n\pi i/t_o$ for some $z_{0}\in S_{p}^{\circ}$
and $ \Gamma(S_{p}^{\circ})= P\sigma_{p}(f(\mathcal{L}))$ therefore by the open mapping theorem it follows that $ P\sigma_{p}(f(\mathcal{L}))$ must contains some open ball centered at $2n\pi i/t_o$. Hence $P\sigma_{p}(f(\mathcal{L}))\cap i\mathbb{R}$ is an infinite set. In particular the set
	$V_{1}=\{z\in S_{p}^{\circ}:\Gamma(z)\in i\mathbb{Q}\}$
	is an infinite set which contains a cluster point in $S_{p}^{\circ}$.
	
	\noindent\textit{Step 2}: Let $z\in V_{1}$. Since $V_{1}\subseteq S_{p}^{\circ}$, therefore $z=\alpha+i\delta_{r^{\prime}}$ for some $r\in(p^{\prime},p)$ and $\alpha\in\mathbb{R}$. Hence the set
	$$\mathcal{V}_{1}=\bigcup\limits_{z\in V_{1}}\{\mathcal{P}_{z}F:F\in L^{r^{\prime}}(\Omega)\text{ whenever }\Im z=\delta_{r^{\prime}}\text{ with }p^{\prime}<r<p\}$$
	 is well defined.  It follows from inequality (\ref{equation2.6}) that $\mathcal{V}_{1}\subseteq L^{p}(\mathfrak{X})$. We now show that $\text{span}(\mathcal{V}_{1})\subseteq L^p(\x)_{per}$. Since $f(\mathcal{L})\mathcal{P}_{z}F=\Gamma(z)\mathcal{P}_{z}F$ for every $z\in S_{p}^{\circ}$, thus $T(t)\mathcal{P}_{z}F=e^{t\Gamma(z)}\mathcal{P}_{z}F$. Now if  $g\in\text{span}(\mathcal{V}_{1})$ then
	$$g=\sum\limits_{j=1}^{k}\beta_{j}\mathcal{P}_{z_{j}}F_{j},\text{ where }z_{j}\in V_{1}\text{ and }\beta_{j}\in\mathbb{C},1\leq j\leq k.$$
	Now for every $j\in\{1,2,\ldots,n\},$ $z_{j}\in V_{1}$ and hence , $\Gamma(z_{j})=ip_{j}/q_{j}$ such that $p_j,q_j\in \Z$ with $q_{j}\neq 0$.  If we choose $s=2\pi q_{1}\cdots q_{n}$, then $T(s)g=g$. Hence span($\mathcal{V}_{1})\subseteq L^p(\x)_{per}$.
	
	\noindent\textit{Step 3}: To prove that $\overline{L^{p}(\x)}_{per}=L^{p}(\mathfrak{X})$, it is enough to prove that span($\mathcal{V}_{1}$) is dense in $L^{p}(\mathfrak{X})$. Let $f\in L^{p^{\prime}}(\mathfrak{X})$ annihilates $\mathcal{V}_{1}$ that is $\sum\limits_{x\in\mathfrak{X}}f(x)\mathcal{P}_{z}F(x)=0$ for all $\mathcal{P}_{z}F\in\mathcal{V}_{1}.$
 Now by duality relation (\ref{equation2.12}) we have,

	$$\int\limits_{\Omega}\wtilde{f}(z,\omega)F(\omega)d\nu(\omega)=\sum\limits_{x\in\mathfrak{X}}f(x)\mathcal{P}_{z}F(x)=0\quad\text{ for all }\mathcal{P}_{z}F\in\mathcal{V}_{1}.$$
	Fix $z\in V_{1}$ and suppose that $z=\alpha+i\delta_{r^{\prime}}$ for some $r\in(p^{\prime},p)$. Then for every $F\in L^{r^{\prime}}(\mathfrak{X})$, we have
	$$\int\limits_{\Omega}\wtilde{f}(\alpha+i\delta_{r^{\prime}},\omega)F(\omega)d\nu(\omega)=0.$$
	Since $F\in L^{r^{\prime}}(\mathfrak{X})$ is arbitrary, therefore from Theorem \ref{restriction} and the above equation, we have $\wtilde{f}(\alpha+i\delta_{r^{\prime}},\omega)=0$ for almost every $\omega\in\Omega$.
  Thus for every $z\in V_{1}$, $\wtilde{f}(z,\omega)=0$ for almost every $\omega\in\Omega$. By the Lemma \ref{analytic}, for each $\omega\in\Omega$ the function $z\rightarrow\wtilde{f}(z,\omega)$ is analytic on $S_{p}^{\circ}$.
  So, we conclude that for almost every $\omega$, the set of zeros of $\wtilde{f}$ has a cluster point in $S_{p}^{\circ}$, and hence $\wtilde{f}(z,\omega)=0$ for every $z\in S_{p}^{\circ} $ and for almost every $\omega$.
  Since $f\in L^{p^{\prime}}(\mathfrak{X})\subseteq L^{2}(\mathfrak{X})$ (as $\mathfrak{X}$ is a discrete space) whenever $2<p<\infty$, therefore by  Plancherel Theorem \ref{Plancherel} we conclude that $f\equiv0$. This proves that span($\mathcal{V}_{1}$) is dense in $L^{p}(\mathfrak{X})$.

\noindent Now we will prove {(3)$\implies$(1):} Since the density of the periodic points is already assumed, so to prove our assertion it is enough to show that $T(t)$ is hypercyclic. In view of Proposition \ref{proposition2.2}, we only need to show that the sets $X_{0}$ and $X_{\infty}$ (as defined in Proposition \ref{proposition2.2}) are dense in $L^{p}(\mathfrak{X})$.
We define the sets
$$V_{2}=\{z\in S_{p}^{\circ}:\Re(\Gamma(z))<0\}\text{ and } V_{3}=\{z\in S_{p}^{\circ}:\Re(\Gamma(z))>0\}.$$
By repeating the arguments of \textit{Step 1} and \textit{Step 2} in the previous proof, one may prove that $V_2$ and $V_3$ are non-empty open sets and both contain a cluster point in $S_{p}^{\circ}$. Corresponding to the sets $V_{i}$, we define (for $i=2,3$)
$$\mathcal{V}_{i}=\bigcup\limits_{z\in V_{i}}\{\mathcal{P}_{z}F:F\in L^{r^{\prime}}(\Omega)\text{ whenever }\Im z=\delta_{r^{\prime}}\text{ with }p^{\prime}<r<p\}.$$
Adopting a similar approach as in \textit{Step 3}, we may easily show that both span($\mathcal{V}_2$)  and span($\mathcal{V}_3$) are dense in $L^p(\x)$.
The proof will be complete once we show that span($\mathcal{V}_{2})\subseteq X_{0}$ and span($\mathcal{V}_{3})\subseteq X_{\infty}$.
 For every $z\in V_{2}$,
$$\lim_{t\rightarrow\infty}\|T(t)\mathcal{P}_{z}F\|_{L^{p}(\mathfrak{X})}=\lim_{t\rightarrow\infty}e^{t\Re(\Gamma(z))}\|\mathcal{P}_{z}F\|_{L^{p}(\mathfrak{X})}=0.$$
This shows that $\mathcal{V}_{2}$ and hence span($\mathcal{V}_{2})$ is a subset of $X_{0}$.

Next we prove that span($\mathcal{V}_{3})\subseteq X_{\infty}$. Let $g\in\text{span}(\mathcal{V}_{3})$ be of the form
$$g=\sum\limits_{j=1}^{k}\alpha_{j}\mathcal{P}_{z_{j}}F_{j},\text{ where }z_{j}\in V_{3}\text{ and }\alpha_{j}\in\mathbb{C},1\leq j\leq k.$$
If we choose $g_t=\sum\limits_{j=1}^{k}e^{-t\Gamma(z_{j})}\alpha_{j}\mathcal{P}_{z_{j}}F_{j}$, then $T(t)g_t=g$ for all $t\geq 0$. Since $\Re(\Gamma(z_{j}))>0$ for each $j$, the limit $\|g_t\|_{L^{p}(\mathfrak{X})}\rightarrow 0$ as $t\rightarrow\infty$. Hence it follows from definition (\ref{infinity}) that span($\mathcal{V}_{3})\subseteq X_{\infty}$. This completes the proof.
\hfill\qed


\subsection{ Proof of Theorem B} {\it Part (1)}: Let $p\in[1,2]$. We know from Proposition \ref{lp-pointspectrum} (i), that $P\sigma_{p}(\mathcal{L})=\emptyset$ . Hence by Theorem \ref{theorem3.1} (b) it follows that $P\sigma_{p}(T(t))=\emptyset$ for all $t>0$. This show that only the zero function is a periodic point, that is, $T(t)$ has no non-trivial periodic point in $L^p(\x)$ for any $p\in[1,2]$.

{\it Part (2)}:  Now we will show that $T(t)$ is not hypercyclic on $L^{p}(\mathfrak{X})$. Let us first assume that $p=2$. We proof this assertion by contradiction. If possible, assume that  there exists a non-zero $h\in L^{2}(\mathfrak{X})$ such that the set $\{T(t)h:t\geq 0\}$ is dense in $L^{2}(\mathfrak{X})$. Then for $g=2h$, there exists a sequence of non-negative real numbers $\{t_{n}\}$ such that $T(t_{n})h\rightarrow g$ in $L^{2}(\mathfrak{X})$ as $n\rightarrow\infty$.

If the sequence $\{t_{n}\}$ is bounded, then there exists a subsequence $\{t_{n_{k}}\}$ of $\{t_{n}\}$ and some number $t_0\geq 0$ such that $t_{n_{k}}\rightarrow t_{0}$ as $k\rightarrow\infty$. Using the strong continuity of $T(t)$, it follows that $T(t_{n_{k}})h\rightarrow T(t_{0})h$ as $k\rightarrow\infty$. Hence $T(t_{0})h=g=2h$, which is impossible as $P\sigma_{2}(T(t_{0}))=\emptyset$.

 If the sequence $\{t_{n}\}$ is unbounded, then without the loss of generality we may assume that $\{t_n\}$ is strictly increasing to $\infty$. By using the Plancherel Theorem \ref{Plancherel}, we have
$$ 4\|h\|^{2}_{L^{2}(\mathfrak{X})}=\lim\limits_{n\rightarrow\infty}\|T(t_{n})h\|^{2}_{L^{2}(\mathfrak{X})}=
\lim\limits_{n\rightarrow\infty}\int\limits_{-\tau/2}^{\tau/2}\int\limits_{\Omega}\exp\{2t_{n}\Re f(\gamma(s))\}|\widetilde{h}(s,\omega)|^{2}d\nu(\omega)d\mu(s).$$

Let $S_{1}=\{s\in[-\tau/2,\tau/2):\exp\{2\Re f(\gamma(s))\}\leq1\}$ and $S_{2}=\{s\in[-\tau/2,\tau/2):\exp\{2\Re f(\gamma(s))\}>1\}$.
If the Plancherel measure of $S_2$ is zero then
$4\|h\|^{2}_{L^{2}(\mathfrak{X})}\leq\|h\|^{2}_{L^{2}(\mathfrak{X})}.$ This implies that $\|h\|_{L^{2}(\mathfrak{X})}=0,$ which is a contradiction to our assumption that $h\neq0$.
If $S_{2}$ has a positive Plancherel measure, then
$$4\|h\|^{2}_{L^{2}(\mathfrak{X})}\geq\lim\limits_{n\rightarrow\infty}\int\limits_{S_{2}}\int\limits_{\Omega}\exp\{2t_{n}\Re f(\gamma(s))\}|\widetilde{h}(s,\omega)|^{2}d\nu(\omega)d\mu(s).$$
By  the Monotone Convergence Theorem, it follows that the above integral tends to infinity as $n$ tends to infinity. This again leads to a contradiction. Hence we conclude that for any $h\in L^{2}(\mathfrak{X})$ and $h\neq 0$, the set $\{T(t)h:t\geq 0\}$ can never approximate $2h$. This proves our assertion for $p=2$.

Now we assume $p\in[1,2)$. Since $\mathfrak{X}$ is a discrete space, hence $L^{p}(\mathfrak{X})\subseteq L^{2}(\mathfrak{X})$ whenever $1\leq p<2$ and $\|h\|_{L^{2}(\mathfrak{X})}\leq \|h\|_{L^{p}(\mathfrak{X})}$ for every $h\in L^{p}(\mathfrak{X})$. This implies that for any $h\in L^{p}(\mathfrak{X})$ and $h\neq 0$, the set $\{T(t)h:t\geq 0\}$ can never approximate $2h$. This completes the proof.
\hfill\qed

\section{Proof of Theorem C and some of its consequences}

Before going into the details, we recall some important facts related to the operator $e^{\xi\mathcal{L}}$ for $\xi\in\C$. We refer \cite{AS} for a detailed study. The operator $e^{\xi\mathcal{L}}$ is a $G$-invariant, bounded operator on $L^{p}(\x)$. Let
$$h_{\xi}(x)=e^{-\xi}\sum\limits_{k=0}^{\infty}\frac{\xi^{k}}{k!}\mu^{(\ast k)}_{1},$$
and $\mu_{1}$ is the probability measure at a distance $1$ from the reference point $o$ and $\mu^{(\ast k)}_{1}$ denotes the $k$-th convolution power of $\mu_{1}$. Then  for $p\geq 1,$ $e^{\xi\mathcal{L}}f=f\ast h_{-\xi},$ for all $f\in L^p(\x)$. Note that we use a different parametrization, our $\gamma(z)$  corresponds to $1-\gamma(z)$ in \cite{AS}.
Now we prove the following Lemma, which gives the norm estimate of the operator $e^{\xi\mathcal{L}}$ for all $\xi\in \C$.

\begin{Lemma}\label{lemma4.1}
	Let $e^{\xi\mathcal{L}}$ be the operator  defined as above. Then for $ p >2$  the following hold.
	\be\label{norm_ezL}\exp\{\Re \xi+\Phi_{p}(\xi)\}\leq \|e^{\xi\mathcal{L}}\|_{p\to p}\leq C\exp\{\Re \xi+\Phi_{p}(\xi)\}.\ee
\end{Lemma}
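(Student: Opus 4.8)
The plan is to reduce both inequalities to one elementary computation, namely the identity
\[
\sup_{z\in S_p}\Re\bigl(\xi\,\gamma(z)\bigr)=\Re\xi+\Phi_p(\xi),
\]
and then feed it into a spectral-radius argument for the lower bound and into Setti's kernel estimate combined with the bipartite symmetry of $\x$ for the upper bound. To prove the identity I would use Proposition \ref{proposition2.6}: $\gamma(S_p)$ is the closed elliptical region centred at $1$ with real semi-axis $A=b\cosh(\delta_p\log q)$ and imaginary semi-axis $B=b\,|\sinh(\delta_p\log q)|$, where $b=2\sqrt q/(q+1)$. Substituting $z=i\delta_p$ into (\ref{gammaz}) gives $1-\gamma(i\delta_p)=b\cosh(\delta_p\log q)=A$, so $A=1-\gamma(i\delta_p)$ and $B=A\,|\tanh(\delta_p\log q)|$. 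Writing $\gamma(z)=1+w$ with $w$ ranging over this ellipse, the function $w\mapsto\Re(\xi w)$ is linear, hence maximised on the boundary; parametrising $w=A\cos\theta+iB\sin\theta$ gives $\sup_w\Re(\xi w)=\sqrt{A^2(\Re\xi)^2+B^2(\Im\xi)^2}=\Phi_p(\xi)$, and adding $\Re\xi$ yields the identity.

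For the lower bound I would invoke spectral mapping. Since $\sigma_p(\mathcal L)=\gamma(S_p)$ by Proposition \ref{proposition2.6}, applying Theorem \ref{theorem3.1}(a) to the entire function $w\mapsto e^{\xi w}$ gives $\sigma_p(e^{\xi\mathcal L})=e^{\xi\gamma(S_p)}$. As the spectral radius never exceeds the operator norm,
\[
\|e^{\xi\mathcal L}\|_{p\to p}\ \ge\ \sup_{z\in S_p}\bigl|e^{\xi\gamma(z)}\bigr|\ =\ \exp\Bigl\{\sup_{z\in S_p}\Re(\xi\gamma(z))\Bigr\}\ =\ \exp\{\Re\xi+\Phi_p(\xi)\},
\]
which is the left-hand inequality and, notably, is valid for every $\xi\in\C$.

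For the upper bound I would split on the sign of $\Re\xi$. When $\Re\xi\le0$ the bound $\|e^{\xi\mathcal L}\|_{p\to p}\le C\exp\{\Re\xi+\Phi_p(\xi)\}$ is Setti's sharp kernel estimate from \cite{AS}, transcribed into the present normalisation (recall our $\gamma$ is $1-\gamma$ in \cite{AS}). To reach $\Re\xi>0$ I would exploit that $\x$ is bipartite: the multiplication operator $Jf(x)=(-1)^{d(x,o)}f(x)$ is an isometric involution on every $L^p(\x)$, and since the nearest-neighbour averaging operator $I-\mathcal L$ interchanges the two classes of the bipartition, one checks $J(I-\mathcal L)J=-(I-\mathcal L)$, hence $J\mathcal L J=2I-\mathcal L$. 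Conjugating the semigroup gives $J e^{\xi\mathcal L}J=e^{\xi(2I-\mathcal L)}=e^{2\xi}e^{-\xi\mathcal L}$, so that $\|e^{\xi\mathcal L}\|_{p\to p}=e^{2\Re\xi}\|e^{-\xi\mathcal L}\|_{p\to p}$. Because $\Phi_p$ depends on $\xi$ only through $(\Re\xi)^2$ and $(\Im\xi)^2$, we have $\Phi_p(-\xi)=\Phi_p(\xi)$; applying the already-established $\Re\le0$ case to $-\xi$ yields $\|e^{-\xi\mathcal L}\|_{p\to p}\le C\exp\{-\Re\xi+\Phi_p(\xi)\}$, and multiplying by $e^{2\Re\xi}$ recovers the desired estimate for $\Re\xi>0$.

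The main obstacle is genuinely the sharp upper bound with the correct exponential rate: this is the content of Setti's delicate analysis of the complex-time kernel $h_{-\xi}$, which I would quote rather than reprove. The remaining points are bookkeeping: verifying that Setti's statement, after the stated parametrisation change, gives exactly $\exp\{\Re\xi+\Phi_p(\xi)\}$ on the half-plane, and confirming that $J$ conjugates $\mathcal L$ to $2I-\mathcal L$ (equivalently, that multiplication by $(-1)^{d(\cdot,o)}$ sends the kernel $h_{-\xi}$ to $e^{2\xi}h_{\xi}$, the kernel of $e^{2\xi}e^{-\xi\mathcal L}$). Once these are in place, the spectral-radius lower bound and the symmetry-plus-Setti upper bound meet at the common rate $\Re\xi+\Phi_p(\xi)$ and close the estimate.
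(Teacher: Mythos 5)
Your proof is correct, and for the substantive part --- the upper bound when $\Re\xi>0$ --- it takes a genuinely different route from the paper. The paper works at the level of Setti's kernel formula: it quotes his Lorentz-norm bound for $h_{-\xi}$ in terms of the heat kernel $h^{\mathbb{Z}}$ on $\mathbb{Z}$, continues that kernel across the imaginary axis via the Bessel reflection formula (the factor $e^{i\pi d}$ from Olver), and then reruns Setti's estimates. Your conjugation by the parity involution $Jf(x)=(-1)^{d(x,o)}f(x)$ is the operator-level avatar of exactly that reflection: the factor $e^{i\pi d}$ in the paper's kernel identity is precisely the symbol of $J$, and the relation $\gamma(z+\tau/2)=2-\gamma(z)$ is the spectral picture of your identity $J\mathcal{L}J=2I-\mathcal{L}$ (which checks out, since every neighbour of $x$ changes the parity of $d(\cdot,o)$, so $J$ anticommutes with the averaging operator $I-\mathcal{L}$). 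This yields the exact identity $\|e^{\xi\mathcal{L}}\|_{p\to p}=e^{2\Re\xi}\|e^{-\xi\mathcal{L}}\|_{p\to p}$ and, since $\Phi_p(-\xi)=\Phi_p(\xi)$, transfers Setti's half-plane estimate to the whole plane without touching the kernel at all --- cleaner and less error-prone than ``imitating the calculations'' of \cite{AS}. For the lower bound the two arguments reduce to the same computation $\sup_{z\in S_p}\Re(\xi\gamma(z))=\Re\xi+\Phi_p(\xi)$: the paper tests $e^{\xi\mathcal{L}}$ against the eigenfunctions $\phi_z\in L^p(\mathfrak{X})$ for $z\in S_p^{\circ}$, while you invoke spectral mapping plus the bound of the norm by the spectral radius; both are valid, and your explicit maximisation of the linear functional $w\mapsto\Re(\xi w)$ over the ellipse $\gamma(S_p)$ supplies a verification that the paper only asserts at this point (deferring the analogous computation to Lemma \ref{lemma4.3}).
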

\begin{proof}
For $\Re \xi\leq 0$, this result is already known (see \cite[Theorem 1]{AS}). Now we prove the result for $\Re \xi>0$. It was proved in \cite[Corollary 4]{AS} that for all non-zero $\xi$,
	\begin{equation}\label{equation3.14}
		\|h_{-\xi}\|_{L^{p,1}(\mathfrak{X})}\leq C\frac{\exp\{\gamma(0)\Re \xi\}}{|z|}\left(\sum\limits_{d=0}^{\infty}dq^{d\delta_{p}}|h^{\mathbb{Z}}_{-\xi(1-\gamma(0))}(d)|\right)
	\end{equation}
	where $h^{\mathbb{Z}}_{-\xi(1-\gamma(0))}(d)$ denotes the heat kernel associated to the heat operator on $\mathbb{Z}$. Here $\|h_{-\xi}\|_{L^{p,1}(\mathfrak{X})}$ denote the Lorentz $L^{p,1}$-norm of $h_{-\xi}$ (for details about Lorentz norm, we refer \cite{LG1}). It was proved in \cite[Page 748]{AS} that $h^{\mathbb{Z}}_{-\xi(1-\gamma(0))}(d)=e^{\xi(1-\gamma(0))}I_{d}(-\xi(1-\gamma(0)))$ where $I_{d}(\xi)$ denotes the modified Bessel function of order $d$.
Since $\Re\{-\xi(1-\gamma(0))\}<0$, so the arguments given in \cite{AS} cannot be applied here. However by  formula (8.01) from \cite[Page 379]{FWO}, we have $$h^{\mathbb{Z}}_{-\xi(1-\gamma(0))}(d)=e^{i\pi d}e^{2\xi(1-\gamma(0))}h^{\mathbb{Z}}_{\xi(1-\gamma(0))}(d)\quad\forall d\in\mathbb{N}\cup\{0\}.$$
Now by substituting the pointwise estimates of $h^{\mathbb{Z}}_{\xi(1-\gamma(0))}(d)$ from \cite[Lemma 6]{AS} in (\ref{equation3.14}), the result follows by just imitating the calculations given in that paper.

Now we will prove the lower bound of $\|e^{\xi\mathcal{L}}\|_{p\to p}$. For $p>2$,
$$\|e^{\xi\mathcal{L}}\|_{p\to p}\geq \frac{\|e^{\xi\mathcal{L}}\phi_{z}\|_{L^{p}(\x)}}{\|\phi_{z}\|_{L^{p}(\x)}}=\exp\{\Re{(\xi\gamma(z)})\}\;\; \text{for all}\; z\in S_{p}^{\circ}.$$
By taking the supremum over all $z\in S_{p}^{\circ},$ we have $\sup\{\exp\{\Re{(\xi\gamma(z)})\}:z\in S_{p}^{\circ}\}=\exp\{\Re\xi+\Phi_{p}(\xi)\}$. This gives the desired lower bound.
\end{proof}

Now we investigate the hypercyclicity of the semigroup $e^{(a\mathcal{L}+b)t}$ when $2<p<\infty$.

\begin{Lemma}\label{lemma4.2}
	Suppose that $T(t)=e^{t(a\mathcal{L}+b)}$ where $t\geq 0$, a is a non-zero complex number and $b$ is real. Then for $2<p<\infty$, $T(t)$ is not hypercyclic on $L^{p}(\mathfrak{X})$ whenever $b\leq -\Re a-\Phi_{p}(a)$ or $b\geq -\Re a+\Phi_{p}(a)$.
\end{Lemma}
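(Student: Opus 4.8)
The plan is to reduce both inequalities on $b$ to a single uniform operator bound that holds for all $t\geq 0$, and then to invoke the elementary fact that an orbit which is either norm-bounded above or bounded away from the origin cannot be dense in the infinite-dimensional space $L^p(\mathfrak{X})$. The one tool needed is the \emph{upper} estimate of Lemma~\ref{lemma4.1}, combined with the observation that $\Phi_p$ is positively homogeneous of degree one and even: since $\Phi_p$ depends only on $(\Re\xi)^2$ and $(\Im\xi)^2$, writing $\xi=\pm ta$ with $t>0$ gives $\Phi_p(\pm ta)=t\,\Phi_p(a)$. Because $b$ is real we may factor $T(t)=e^{tb}e^{ta\mathcal{L}}$ and $T(t)^{-1}=e^{-tb}e^{-ta\mathcal{L}}$, so the scalar $e^{\pm tb}$ pulls out of every norm.

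First I would treat $b\leq -\Re a-\Phi_p(a)$. Applying the upper bound of Lemma~\ref{lemma4.1} to $\xi=ta$ yields
$$\|T(t)\|_{p\to p}=e^{tb}\|e^{ta\mathcal{L}}\|_{p\to p}\leq C\exp\{t(b+\Re a+\Phi_p(a))\}.$$
The hypothesis forces $b+\Re a+\Phi_p(a)\leq 0$, so $\|T(t)\|_{p\to p}\leq C$ for all $t\geq0$. Every orbit $\{T(t)x:t\geq0\}$ then lies in the ball of radius $C\|x\|$ and is bounded; a bounded set is never dense in the infinite-dimensional space $L^p(\mathfrak{X})$, so $T(t)$ is not hypercyclic.

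The case $b\geq -\Re a+\Phi_p(a)$ is the one I expect to be the main obstacle, because here $\|T(t)\|_{p\to p}$ grows without bound and the orbit cannot be bounded from above. The remedy is to bound it from \emph{below} using the inverse semigroup. From $\|x\|=\|T(t)^{-1}T(t)x\|\leq\|T(t)^{-1}\|_{p\to p}\,\|T(t)x\|$ we obtain $\|T(t)x\|\geq\|x\|/\|T(t)^{-1}\|_{p\to p}$. Applying Lemma~\ref{lemma4.1} now to $\xi=-ta$ and using evenness of $\Phi_p$,
$$\|T(t)^{-1}\|_{p\to p}=e^{-tb}\|e^{-ta\mathcal{L}}\|_{p\to p}\leq C\exp\{-t(b+\Re a-\Phi_p(a))\}.$$
The hypothesis gives $b+\Re a-\Phi_p(a)\geq0$, hence $\|T(t)^{-1}\|_{p\to p}\leq C$ uniformly in $t\geq0$, so $\|T(t)x\|\geq\|x\|/C$ for every $x$ and every $t\geq0$. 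Thus each nonzero orbit stays at distance at least $\|x\|/C$ from the origin and misses a neighborhood of $0$; since the zero vector also has non-dense orbit, $T(t)$ admits no hypercyclic vector.

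Two points make the argument airtight. I would record that $e^{-tA}=T(t)^{-1}$ is genuinely a bounded operator, since $A=a\mathcal{L}+b$ is bounded on $L^p(\mathfrak{X})$, so Lemma~\ref{lemma4.1} legitimately applies at $\xi=-ta$. I would also confirm $\Phi_p(a)>0$ for every nonzero $a$ when $p>2$: the factor $1-\gamma(i\delta_p)=q^{1/2}(q^{\delta_p}+q^{-\delta_p})/(q+1)$ is strictly positive, and because $\delta_p\neq0$ for $p>2$ the coefficient $\tanh^2(\delta_p\log q)$ is strictly positive as well, so the expression under the square root is positive whenever $a\neq0$. This ensures the two endpoint values of $b$ are genuinely covered, the exponents above being $\leq 0$ precisely at the boundary.
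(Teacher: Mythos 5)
Your proposal is correct and follows essentially the same route as the paper: the upper bound of Lemma~\ref{lemma4.1} applied to $\xi=ta$ gives a bounded orbit when $b\leq-\Re a-\Phi_p(a)$, and the identity $h=e^{-ta\mathcal{L}}e^{ta\mathcal{L}}h$ together with the same estimate at $\xi=-ta$ (using that $\Phi_p$ is even and homogeneous) bounds every nonzero orbit away from the origin when $b\geq-\Re a+\Phi_p(a)$. Your added checks on the boundedness of $T(t)^{-1}$ and the positivity of $\Phi_p(a)$ are correct but implicit in the paper's argument.
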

\begin{proof} Fix $p\in(2,\infty)$ and let $a$ be a non-zero complex number. To prove that the semigroup $T(t)$ is not hypercyclic for the given range of $b$, it is enough to show that
the set $\{T(t)h:t\geq 0\}$ is not dense in $L^p(\x)$ for any $h\in L^p(\x)$. If $b\leq -\Re a-\Phi_{p}(a)$ then it follows from the Lemma \ref{lemma4.1}  that for every $h\in L^{p}(\mathfrak{X})$,
$$\|T(t)h\|_{L^{p}(\mathfrak{X})}=e^{bt}\|e^{at\mathcal{L}}h\|_{L^{p}(\mathfrak{X})}\leq C\exp\{t(b+\Re a+\Phi_{p}(a))\}\|h\|_{L^{p}(\mathfrak{X})}\leq C\|h\|_{L^{p}(\mathfrak{X})} .$$
This show that the set $\{T(t)h:t\geq 0\}$ is bounded and hence it cannot be dense.

 Now we consider the case when $b\geq -\Re a+\Phi_{p}(a)$. We prove this assertion by contradiction. Suppose that there exists a non-zero $h$ in $L^{p}(\mathfrak{X})$ such that $\{T(t)h:t\geq 0\}$ is dense in $L^{p}(\mathfrak{X})$. Note that for all $t\geq 0$ $e^{at\mathcal{L}}e^{-at\mathcal{L}}h=e^{-at\mathcal{L}}e^{at\mathcal{L}}h=h$. By using the norm estimate (\ref{norm_ezL}), we have
$$\|h\|_{L^{p}(\mathfrak{X})}=\|e^{-at\mathcal{L}}e^{at\mathcal{L}}h\|_{L^{p}(\mathfrak{X})}\leq C\exp\{t(-\Re a+\Phi_{p}(a))\}\|e^{at\mathcal{L}}h\|_{L^{p}(\mathfrak{X})}.$$
From the above inequality we obtain
$$\|T(t)h\|_{L^{p}(\mathfrak{X})}=e^{bt}\|e^{at\mathcal{L}}h\|_{L^{p}(\mathfrak{X})}\geq C\exp\{t(b+\Re a-\Phi_{p}(a))\}\|h\|_{L^{p}(\mathfrak{X})}\geq C\|h\|_{L^{p}(\mathfrak{X})} .$$
Thus we conclude that the function `0' does not belong to the closure of $\{T(t)h:t\geq 0\}$ in $L^{p}(\mathfrak{X})$ and we finally arrive at a contradiction. This shows that $\{T(t)h:t\geq 0\}$ cannot be dense in $L^{p}(\mathfrak{X})$ for any $h\in L^{p}(\mathfrak{X})$. This completes the proof.
\end{proof}

\subsection{Proof of Theorem C} The equivalence of the conditions (1) and (2) is already proved in Theorem A. Also (4) implies (3) is a consequence of the Lemma \ref{lemma4.2} and (1) implies (4) is obvious. Now to complete the proof we only need to show the equivalence of (3) and (1), which will follow from the the following lemma.
\begin{Lemma}\label{lemma4.3}
	Let $T(t)$ be defined as in Theorem C. Then for $2<p<\infty$, $T(t)$ is chaotic on $L^{p}(\mathfrak{X})$ if and only if
$ -\Re a-\Phi_{p}(a)< b<  -\Re a+\Phi_{p}(a)$.
\end{Lemma}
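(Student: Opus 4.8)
The plan is to reduce the statement to Theorem A and then carry out one explicit computation of a point spectrum. By the equivalence of (1) and (2) already established in Theorem A, for $2<p<\infty$ the semigroup $T(t)=e^{t(a\mathcal{L}+b)}$ is chaotic on $L^{p}(\mathfrak{X})$ if and only if it admits a non-trivial periodic point. Writing $\Gamma(z)=a\gamma(z)+b$, which is a non-constant holomorphic function on $S_{p}^{\circ}$ since $a\neq 0$, I would first argue exactly as in Step 1 and Step 2 of the proof of Theorem A that $T(t)$ has a non-trivial periodic point if and only if $\Gamma(S_{p}^{\circ})\cap i\mathbb{R}\neq\emptyset$. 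Indeed, a non-trivial periodic point forces $1\in P\sigma_{p}(e^{t_{0}(a\mathcal{L}+b)})$ for some $t_{0}>0$, whence $e^{t_{0}\Gamma(z_{0})}=1$ for some $z_{0}\in S_{p}^{\circ}$ by the spectral mapping Theorem \ref{theorem3.1} together with Proposition \ref{lp-pointspectrum}, so that $\Gamma(z_{0})\in i\mathbb{R}$; conversely, if $\Gamma(S_{p}^{\circ})$ meets $i\mathbb{R}$, the open mapping theorem produces an open ball inside $\Gamma(S_{p}^{\circ})$ about a purely imaginary value, which then contains infinitely many points of $i\mathbb{Q}$, and the corresponding Poisson transforms $\mathcal{P}_{z}F$ furnish non-trivial periodic points exactly as in Step 2.

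Since $b$ is real, $\Re(\Gamma(z))=\Re(a\gamma(z))+b$, so the condition $\Gamma(S_{p}^{\circ})\cap i\mathbb{R}\neq\emptyset$ is equivalent to $-b$ belonging to the set $\{\Re(a\gamma(z)):z\in S_{p}^{\circ}\}$. The heart of the proof is therefore to identify this set. By Proposition \ref{lp-pointspectrum} and Proposition \ref{proposition2.6}, $\gamma(S_{p}^{\circ})$ is the open elliptic region (\ref{lp-spectrum}), centred at $(1,0)$ with horizontal semi-axis $A=\kappa\cosh(\delta_{p}\log q)$ and vertical semi-axis $B=\kappa|\sinh(\delta_{p}\log q)|$, where $\kappa=2\sqrt{q}/(q+1)$ is the constant appearing in (\ref{lp-spectrum}). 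Writing $w=\gamma(z)=u+iv$, we have $\Re(aw)=(\Re a)u-(\Im a)v$, a non-zero real-linear functional of $(u,v)$ (non-zero because $a\neq 0$). As such a functional is an open map on $\mathbb{R}^{2}$ and the region is open, convex and connected, its image is an open interval; parametrising the boundary ellipse as $(u,v)=(1+A\cos\theta,\,B\sin\theta)$ gives $\Re(aw)=\Re a+\big((\Re a)A\cos\theta-(\Im a)B\sin\theta\big)$, whose oscillatory part has amplitude $R=\sqrt{(\Re a)^{2}A^{2}+(\Im a)^{2}B^{2}}$. Hence $\{\Re(a\gamma(z)):z\in S_{p}^{\circ}\}=(\Re a-R,\,\Re a+R)$.

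It then remains to recognise that $R=\Phi_{p}(a)$. I would compute, directly from (\ref{gammaz}), that $1-\gamma(i\delta_{p})=\frac{q^{1/2-\delta_{p}}+q^{1/2+\delta_{p}}}{q+1}=\kappa\cosh(\delta_{p}\log q)=A$; factoring $A$ out of $R$ and using $B^{2}=\kappa^{2}\sinh^{2}(\delta_{p}\log q)$ yields $R=A\sqrt{(\Re a)^{2}+\tanh^{2}(\delta_{p}\log q)(\Im a)^{2}}$, which is precisely $\Phi_{p}(a)$ by (\ref{Phia}). Combining the steps, $T(t)$ is chaotic if and only if $-b\in(\Re a-\Phi_{p}(a),\,\Re a+\Phi_{p}(a))$, i.e.\ $-\Re a-\Phi_{p}(a)<b<-\Re a+\Phi_{p}(a)$, which is condition (3).

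The main obstacle is the explicit geometric computation of the second paragraph together with the identity $1-\gamma(i\delta_{p})=\kappa\cosh(\delta_{p}\log q)$: the whole argument succeeds precisely because the maximal deviation of the linear functional $\Re(a\,\cdot\,)$ over the spectral ellipse collapses to the closed form $\Phi_{p}(a)$. Care is also needed to keep the interval open, so as to obtain the strict inequalities in (3); this is guaranteed because $\gamma(S_{p}^{\circ})$ is the open ellipse and $\Re(a\,\cdot\,)$ is an open map.
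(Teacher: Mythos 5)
Your proof is correct and follows essentially the same route as the paper: reduce via Theorem A and the spectral mapping/open mapping theorems to the question of whether $a\gamma(S_p^\circ)+b$ meets $i\mathbb{R}$, then compute the range of $\Re(a\gamma(\cdot))$ and identify its half-width with $\Phi_p(a)$. The only difference is presentational — you parametrise the boundary ellipse of $\gamma(S_p^\circ)$ and use convexity, whereas the paper parametrises $\partial S_p$ and invokes the maximum modulus principle on $e^{t(a\gamma(\cdot)+b)}$ — and both yield the same amplitude $\sqrt{(\Re a)^2A^2+(\Im a)^2B^2}=\Phi_p(a)$.
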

\begin{proof}Let $T(t)$ is chaotic on $L^p(\x)$. Then $L^p(\x)_{per}\neq\{0\}$. As in the proof of Theorem A there exists $z_{0}\in S_{p}^{\circ}$ such that $\exp\{(a\gamma(z_{0})+b)t_0)\}=1$ for some $t_0>0$. By applying the open mapping theorem on the analytic function $\exp\{(a\gamma(\cdot)+b)t_0\}$, it follows that the set $\exp\{(a\gamma(S_{p}^{\circ})+b)t_0\}$ contains an open ball centered at 1. This implies that
	\be \label{maxestimates} \max\limits_{z\in S_{p}}|e^{t_0(a\gamma(z)+b)}|>1\quad\text{and}\quad\min\limits_{z\in S_{p}}|e^{t_0(a\gamma(z)+b)}|<1.\ee
Suppose $a=x+iy$ and that $z=s+i\delta_{p}$. Let $h(s)=\Re(a\gamma(s+i\delta_{p})+b)$ be a complex-valued function defined for all $s\in[-\tau/2,\tau/2]$.
It follows from the definition of $\gamma$ that  $\Re \gamma(s+i\delta_{p})=1-\frac{q^{1/p}+q^{1/p^{\prime}}}{q+1}\cos(s\log q)\;\text{ and }\;\Im \gamma(s+i\delta_{p})=\frac{q^{1/p}-q^{1/p^{\prime}}}{q+1}\sin(s\log q).$ Hence
$$h(s)=x-\frac{q^{1/p}+q^{1/p^{\prime}}}{q+1}x\cos(s\log q)-\frac{q^{1/p}-q^{1/p^{\prime}}}{q+1}y\sin(s\log q)+b.$$
	A straightforward computation yield that the maximum and the minimum values of $h$ on the interval $[-\tau/2,\tau/2]$ are $x+\Phi_{p}(a)+b$ and $x-\Phi_{p}(a)+b$ respectively.
Since $\gamma$ is a $\tau$-periodic function, therefore by applying the Maximum Modulus principle on the function $e^{t(a\gamma(\cdot)+b)}$, we obtain
	$$ \max\limits_{z\in S_{p}}|e^{t(a\gamma(z)+b)}|=\exp\{(\Re a+\Phi_{p}(a)+b)t\} \;\; \text{and} \quad\min\limits_{z\in S_{p}}|e^{t(a\gamma(z)+b)}|=\exp\{(\Re a-\Phi_{p}(a)+b)t\}.$$
Hence it follows from the above estimates and  (\ref{maxestimates}) that $ -\Re a-\Phi_{p}(a)< b<  -\Re a+\Phi_{p}(a)$.

Now we prove the converse. It is follows from the above discussion that  if $-\Re a-\Phi_{p}(a)<b<-\Re a+\Phi_{p}(a)$, then for any fix $t>0$,
	$\max\limits_{z\in S_{p}}|e^{t(a\gamma(z)+b)}|>1\quad\text{and}\quad\min\limits_{z\in S_{p}}|e^{t(a\gamma(z)+b)}|<1.$ Now it can be proved easily that there exists
 $z_0\in S_{p}^{\circ}$ such that $|e^{t(a\gamma(z_0)+b)}|=1$. Therefore the assertion follows by a similar argument given in the proof of Theorem A.
This completes the proof. \end{proof}

\subsection{Some Consequences}
There are some well-known examples of semigroups which are generated by the affine functions. As a consequence of Theorem C we have the following interesting results about the chaotic dynamics of these semigroups.
\subsubsection{The Heat Semigroup} It is already mentioned in the introduction that the chaotic dynamics of the heat semigroup generated by certain shifts of the Laplace-Beltrami operator, are extensively studied on symmetric spaces \cite{JW,MP} and harmonic $NA$-groups \cite{RPS}. Our objective is to formulate these results for the heat semigroup on homogeneous trees by using Theorem C as a tool. Recall that the heat semigroup on $\mathfrak{X}$ generated by certain shifts of $\mathcal{L}$ is defined by the formula
$$T(t)=e^{-t(\mathcal{L}-b)}\quad\text{ where }t\geq 0,~b\in\mathbb{R}.$$
 Using Theorem B, it is clear that for any $b\in\mathbb{R}$, $T(t)$ is neither hypercyclic nor does it have any non-trivial periodic point on $L^{p}(\mathfrak{X})$ whenever $1\leq p\leq 2$. However putting $a=-1$, the following result is an immediate consequence of Theorem C.

\begin{Theorem}
	Suppose that $T(t)=e^{-t(\mathcal{L}-b)}$ where $t\geq 0$. Then for $2<p<\infty$, the following are equivalent.
	\begin{itemize}
		\item[(1)] $T(t)$ is chaotic on $L^{p}(\mathfrak{X})$.
		\item[(2)] $T(t)$ has a non-trivial periodic point.
		\item[(3)] $b$ satisfies the relation $\gamma(i\delta_{p})<b<\gamma(\tau/2+i\delta_{p})$.
		\item [(4)] $T(t)$ is hypercyclic.
	\end{itemize}
\end{Theorem}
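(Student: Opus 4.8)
The plan is to derive this theorem directly from Theorem C by specializing to $a=-1$. Observe first that the heat semigroup can be rewritten as $T(t)=e^{-t(\mathcal{L}-b)}=e^{t(a\mathcal{L}+b)}$ with $a=-1$ and the same real shift $b$. Since $a=-1$ is a nonzero complex number and $b$ is real, Theorem C applies verbatim on $L^{p}(\mathfrak{X})$ for $2<p<\infty$. In particular, the equivalences $(1)\Leftrightarrow(2)\Leftrightarrow(4)$ in the present statement are immediate consequences of the corresponding equivalences in Theorem C, so the only work left is to rewrite condition $(3)$ of Theorem C in the explicit form $\gamma(i\delta_{p})<b<\gamma(\tau/2+i\delta_{p})$.

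To this end I would first compute $\Phi_{p}(-1)$. Since $\Re a=-1$ and $\Im a=0$, the definition (\ref{Phia}) gives
$$\Phi_{p}(-1)=(1-\gamma(i\delta_{p}))\cdot\left((-1)^{2}+\tanh^{2}(\delta_{p}\log q)\cdot 0\right)^{1/2}=1-\gamma(i\delta_{p}).$$
Substituting $\Re a=-1$ and $\Phi_{p}(a)=1-\gamma(i\delta_{p})$ into the inequality $-\Re a-\Phi_{p}(a)<b<-\Re a+\Phi_{p}(a)$ of Theorem C $(3)$ yields
$$\gamma(i\delta_{p})<b<2-\gamma(i\delta_{p}).$$

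It therefore remains only to verify the identity $2-\gamma(i\delta_{p})=\gamma(\tau/2+i\delta_{p})$, which I expect to be the single genuine computation in the proof (and the only potential obstacle, though a mild one). Using the explicit formula (\ref{gammaz}) together with $\delta_{p}=1/p-1/2$, so that $1/2\pm\delta_{p}\in\{1/p,1/p'\}$, one has $\gamma(i\delta_{p})=1-(q^{1/p}+q^{1/p'})/(q+1)$. For the other endpoint, the key observation is that $\tau=2\pi/\log q$ forces $q^{\pm i\tau/2}=e^{\pm i\pi}=-1$, so that the two exponentials appearing in $\gamma(\tau/2+i\delta_{p})$ each pick up a factor of $-1$ relative to those in $\gamma(i\delta_{p})$; this gives $\gamma(\tau/2+i\delta_{p})=1+(q^{1/p}+q^{1/p'})/(q+1)=2-\gamma(i\delta_{p})$. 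Combining this identity with the displayed inequality identifies condition $(3)$ of Theorem C with condition $(3)$ of the present theorem, completing the reduction. Once this is in place, all four conditions are equivalent, being precisely the images under the substitution $a=-1$ of the four equivalent conditions of Theorem C.
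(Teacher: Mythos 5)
Your proposal is correct and is exactly the route the paper takes: the paper states this theorem as an immediate consequence of Theorem C with $a=-1$, and your computation of $\Phi_{p}(-1)=1-\gamma(i\delta_{p})$ together with the identity $2-\gamma(i\delta_{p})=\gamma(\tau/2+i\delta_{p})$ (via $q^{\pm i\tau/2}=-1$) correctly converts condition (3) of Theorem C into the stated range for $b$.
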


The geometrical interpretation of the above result can also be seen in the following figure. The figure on the left represents the $L^p$-point spectrum of $\mathcal{L}$ and figure on right represents the $L^p$-point spectrum of $\mathcal{L}-b$. It is easy to see that the $L^p$-point spectrum of $\mathcal{L}-b$ cuts the imaginary axis at infinitely many points if and only if $\gamma(i\delta_{p})<b<\gamma(\tau/2+i\delta_{p})$.

\begin{figure}[h]
\centering
\begin{tikzpicture}[scale=0.7]
\tikzset{edge/.style={decoration={markings,mark=at position 1 with {\arrow[scale=2,>=stealth]{>}}}, postaction={decorate}}}
\draw[edge] (0,0)--(8.5,0) node[below] {$X$};
\draw[edge] (1,-2.5)--(1,2.5) node[left] {$Y$};
\draw (4.5,0) ellipse (3 cm and 2 cm);
\node[above right] at (1.5,0) {$\gamma(i\delta_{p})$};
\node[below left] at (7.5,0) {$\gamma(\tau/2+i\delta_{p})$};
\node[below left] at (1,0) {$O$};
\draw[edge] ([shift=(105:6)]11,-5) arc (105:75:6);
\draw[edge] (13.5,0)--(21.5,0) node[below] {$X$};
\draw[edge] (16,-2.5)--(16,2.5) node[left] {$Y$};
\draw (17.5,0) ellipse (3 cm and 2 cm);
\node[below left] at (14.5,0) {$\gamma(i\delta_{p})-b$};
\node[below left] at (20.5,0) {$\gamma(\tau/2+i\delta_{p})-b$};
\node[below left] at (16,0) {$O$};
\foreach \i in {1.5,7.5,14.5,20.5}{
\filldraw (\i,0) circle (2 pt);
}
\end{tikzpicture}
\caption{•}
\end{figure}

\subsubsection{The Schr\"{o}dinger Semigroup} Now we consider the Schr\"{o}dinger semigroup generated by the perturbation of $i\mathcal{L}$.
Once again using Theorem C with $a=i$, we have the following.

\begin{Theorem}\label{schrodinger}
	Suppose that $T(t)=e^{t(i\mathcal{L}+b)}$ where $t\geq 0$. Then for $2<p<\infty$, the following are equivalent.
	\begin{itemize}
		\item[(1)] $T(t)$ is chaotic on $L^{p}(\mathfrak{X})$.
		\item[(2)] $T(t)$ has a non-trivial periodic point.
		\item[(3)] $b$ satisfies the relation $\Im\gamma(\tau/4+i\delta_{p})<b<\Im\gamma(-\tau/4+i\delta_{p})$.
		\item [(4)] $T(t)$ is hypercyclic.
	\end{itemize}
\end{Theorem}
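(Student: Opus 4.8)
The plan is to obtain this theorem directly from Theorem C by specializing to $a=i$, so that the only substantive work is rewriting the analytic condition (3) of Theorem C in terms of the imaginary parts of $\gamma$ at the two points $\pm\tau/4+i\delta_p$. Since Theorem C already establishes the mutual equivalence of (1), (2), (4), and its own condition (3), I would only need to check that the inequality $-\Re a-\Phi_p(a)<b<-\Re a+\Phi_p(a)$, evaluated at $a=i$, is literally the inequality $\Im\gamma(\tau/4+i\delta_p)<b<\Im\gamma(-\tau/4+i\delta_p)$ appearing in the statement. Because $\Re(i)=0$, condition (3) of Theorem C collapses to $-\Phi_p(i)<b<\Phi_p(i)$, so the whole matter reduces to two elementary computations.

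First I would simplify $\Phi_p(i)$. Since $\Re(i)=0$ and $\Im(i)=1$, the defining formula (\ref{Phia}) gives $\Phi_p(i)=(1-\gamma(i\delta_p))\,|\tanh(\delta_p\log q)|$. Evaluating the real/imaginary part formulas for $\gamma(s+i\delta_p)$ recorded in the proof of Lemma \ref{lemma4.3} at $s=0$ yields $1-\gamma(i\delta_p)=\frac{q^{1/p}+q^{1/p'}}{q+1}$, which is positive. Writing $q^{\pm\delta_p}$ as $q^{1/p}/\sqrt q$ and $q^{1/p'}/\sqrt q$ (using $\tfrac12+\delta_p=\tfrac1p$ and $\tfrac12-\delta_p=\tfrac1{p'}$) gives $\tanh(\delta_p\log q)=\frac{q^{1/p}-q^{1/p'}}{q^{1/p}+q^{1/p'}}$. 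Multiplying the two factors, the common term $q^{1/p}+q^{1/p'}$ cancels, and since $q^{1/p}<q^{1/p'}$ for $p>2$ the absolute value flips the sign, so I arrive at $\Phi_p(i)=\frac{q^{1/p'}-q^{1/p}}{q+1}$. Hence condition (3) of Theorem C becomes $-\frac{q^{1/p'}-q^{1/p}}{q+1}<b<\frac{q^{1/p'}-q^{1/p}}{q+1}$.

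Next I would compute the two endpoints in the statement. Because $\tau=2\pi/\log q$, the points $s=\pm\tau/4$ satisfy $s\log q=\pm\pi/2$, so $\cos(s\log q)=0$ and $\sin(s\log q)=\pm1$. Substituting into $\Im\gamma(s+i\delta_p)=\frac{q^{1/p}-q^{1/p'}}{q+1}\sin(s\log q)$ gives $\Im\gamma(\tau/4+i\delta_p)=\frac{q^{1/p}-q^{1/p'}}{q+1}=-\Phi_p(i)$ and $\Im\gamma(-\tau/4+i\delta_p)=\frac{q^{1/p'}-q^{1/p}}{q+1}=\Phi_p(i)$. Thus the interval $\bigl(\Im\gamma(\tau/4+i\delta_p),\,\Im\gamma(-\tau/4+i\delta_p)\bigr)$ is exactly $(-\Phi_p(i),\Phi_p(i))$, the range produced by Theorem C at $a=i$.

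Finally I would assemble the pieces: conditions (1), (2), and (4) are verbatim those of Theorem C, and by the two computations above condition (3) here is precisely condition (3) of Theorem C specialized to $a=i$, so the stated equivalence follows immediately. I do not expect a genuine obstacle here; the only points demanding care are bookkeeping ones, namely tracking that $\delta_p<0$ for $p>2$ so that the absolute value in $\Phi_p$ reverses the sign of $q^{1/p}-q^{1/p'}$, and confirming the reductions $\tfrac12+\delta_p=\tfrac1p$ and $\tfrac12-\delta_p=\tfrac1{p'}$ that let the exponentials in $\gamma$ collapse cleanly to powers $q^{1/p}$ and $q^{1/p'}$.
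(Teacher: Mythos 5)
Your proposal is correct and follows exactly the paper's route: the paper derives this theorem by specializing Theorem C to $a=i$, and your computations verifying $\Phi_p(i)=\frac{q^{1/p'}-q^{1/p}}{q+1}$ and $\Im\gamma(\pm\tau/4+i\delta_p)=\mp\Phi_p(i)$ correctly fill in the arithmetic that the paper leaves implicit. No gaps.
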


In a similar way as above, Theorem \ref{schrodinger} can also be described geometrically using the following figure.

\begin{figure}[H]
\centering
\begin{tikzpicture}[scale=0.7]
\tikzset{edge/.style={decoration={markings,mark=at position 1 with {\arrow[scale=2,>=stealth]{>}}}, postaction={decorate}}}
\draw[edge] (0,0)--(8.5,0) node[below] {$X$};
\draw[edge] (1,-2.5)--(1,2.5) node[left] {$Y$};
\draw (4.5,0) ellipse (3 cm and 2 cm);
\node[above] at (4.5,2) {$\gamma(-\tau/4+i\delta_{p})$};
\node[below] at (4.5,-2) {$\gamma(\tau/4+i\delta_{p})$};
\node[below left] at (1,0) {$O$};
\draw[edge] ([shift=(105:6)]10.5,-5) arc (105:75:6);
\draw[edge] (14,-4)--(14,4.5) node[left] {$Y$};
\draw[edge] (13,-3)--(19.5,-3) node[below] {$X$};
\draw (15.5,0.5) ellipse (2 cm and 3 cm);
\node[above right] at (17.5,0.5) {$i\gamma(\tau/4+i\delta_{p})+b$};
\node[rotate=40, below left] at (13,1) {$i\gamma(-\tau/4+i\delta_{p})+b$};
\node[below left] at (14,-3) {$O$};
\foreach \i/\j in {4.5/2,4.5/-2,17.5/0.5,13.5/0.5}{
\filldraw (\i,\j) circle (2 pt);
}
\end{tikzpicture}
\caption{•}
\end{figure}

 
\end{document}